\pgfplotsset{compat=1.16}
\newcommand{\rr}{\mathbb{R}}
\newcommand{\nn}{\mathbb{N}}
\newcommand{\qq}{\mathbb{Q}}
\newcommand{\zz}{\mathbb{Z}}
\newcommand{\cc}{\mathbb{C}}
\newcommand{\dd}{\mathbb{D}}
\newcommand{\eps}{\varepsilon}
\newcommand{\fff}{{\mathcal F}}
\newcommand{\ppp}{{\mathcal P}}
\newcommand{\vvv}{{\mathcal V}}
\newcommand{\dc}{\coloneqq}
\newcommand{\isp}[1]{\quad\text{#1}\quad}
\newcommand{\del}{\partial}
\DeclareMathOperator{\Arg}{Arg}
\DeclareMathOperator{\res}{Res}
\newtheorem{theorem}{Theorem}
\newtheorem*{theorem*}{Theorem}
\newtheorem{corollary}{Corollary}
\newtheorem{lemma}{Lemma}
\newtheorem{prop}{Proposition}
\theoremstyle{remark}\newtheorem{remark}{Remark}
\title{On the Cauchy transform of complex powers of the identity function}
\author{Benjamin Faktor, Michael Kuhn, Gahl Shemy}
\address{University of California Santa Barbara, Santa Barbara CA 93106}
\email{benjaminfaktor@ucsb.edu}
\address{University of California Santa Barbara, Santa Barbara CA 93106}
\email{mkuhn@ucsb.edu}
\address{University of California Santa Barbara, Santa Barbara CA 93106}
\email{gahlshemy@ucsb.edu}
\keywords{Cauchy transform, hypergeometric function}
\begin{document}
    \maketitle
    \vspace{-0.8cm}
    \begin{abstract}
        The integral $\int_{|z|=1} \frac{z^\beta}{z-\alpha} dz$ for $\beta=\frac{1}{2}$ has been comprehensively studied by Mortini and Rupp for pedagogical purposes. We write for a similar purpose, elaborating on their work with the more general consideration $\beta \in \mathbb{C}$. This culminates in an explicit solution in terms of the hypergeometric function for $|\alpha| \neq 1$ and any $\beta \in \cc$. For rational $\beta$, the integral is reduced to a finite sum. A differential equation in $\alpha$ is derived for this integral, which we show has similar properties to the hypergeometric equation.
    \end{abstract}

    \section{Introduction}
        The purpose of this paper is to investigate integrals of the form
        \begin{equation}\label{main-integral}
            \int_{|z|=1} \frac{z^\beta}{z-\alpha}\;dz.
        \end{equation}
        Our personal interest in this type of integral stems from a recent paper due to Mortini and Rupp \cite{mortini-rupp}, in which the authors evaluate \eqref{main-integral} for $\beta=\tfrac{1}{2}$ using various methods.
        
        Initially we note that the function $z^\beta$ must be defined, for general $\beta\in\cc$, in terms of some branch of the complex logarithm. In our notation, for $0<\theta<2\pi$, $\log_\theta(z)$ will represent the branch of the complex logarithm with branch cut $\{re^{i\theta} :  r \ge 0\}$; it is defined on the simply connected domain $\cc\setminus\{re^{i\theta}: r \ge 0\}$, and we fix $\log_\theta(1) = 0$.  Under these conditions our branch is
        $$
            \log_\theta(z) = \ln|z| + i\arg_\theta(z)
        $$
        where $\arg_\theta$ is the argument function with values in $(\theta-2\pi,\theta)$.\\
        This branch can be related to the branch of the square root discussed in \cite{mortini-rupp} by taking $t_0=\theta-2\pi$.\\
        We denote by $\Arg(z)$ the argument of $z$ falling in the range $[0,2\pi)$, and by $\arg(z)$ the equivalence class (modulo $2\pi$) of all possible values for the argument of $z$. Any condition with $\arg(z)$ is considered satisfied if one representative satisfies the condition.
        
        The implications of using a branch of the complex logarithm to define the complex power are that even when we choose $|\alpha|\neq 1$, the meromorphic function
        \begin{equation}\label{m-def}
            m_{\alpha,\beta,\theta}(z) := \frac{z^\beta}{z-\alpha} = \frac{e^{\beta\log_\theta(z)}}{z-\alpha}
        \end{equation}
        will not be analytic, or even \textit{continuous}, on the boundary of the unit disk. This is due to the branch cut necessary for the $\log_\theta$ function used in \eqref{m-def}. The discontinuity at the branch cut, although merely a jump, prevents a simple evaluation with direct application of Cauchy's Residue Theorem. Rather, one must proceed using different methods.\\
        
        The main results of the paper are explicit expressions of  \eqref{main-integral} in the two cases of $|\alpha| > 1$ and $|\alpha| < 1$.  Specifically, we prove: \begin{theorem}\label{thm}
            When $|\alpha|>1$,
            \begin{equation}
                \int_{\del\dd} m_{\alpha,\beta,\theta} = 
                \begin{cases}
                   -2\pi i\alpha^{\beta}\qquad\qquad\qquad\qquad\qquad\qquad\qquad\qquad\qquad\qquad\beta\in\zz_{<0},\\
                   0\qquad\qquad\qquad\qquad\qquad\qquad\qquad\qquad\qquad\qquad\qquad\;\;\;\beta=0,\\
                   e^{i\beta\theta}\left(1-e^{-2\pi i\beta}\right)\frac{1}{\beta}\left[1-\,_2F_1(1,\beta;1+\beta;\alpha^{-1}e^{i\theta})\right]\qquad\;\,\beta\in\cc\setminus\zz_{\leq0}.\notag
                \end{cases}
            \end{equation}
            
            When $|\alpha|<1$,
            \begin{equation}
                \int_{\del\dd} m_{\alpha,\beta,\theta} = 
                \begin{cases}
                    2\pi i\alpha^\beta \qquad\qquad\qquad\qquad\qquad\qquad\qquad\qquad\qquad\beta\in\zz_{\geq0},\\
                    e^{i\beta\theta}\left(1-e^{-2\pi i\beta}\right)\frac{1}{\beta}\,_2F_1(1,-\beta;1-\beta;\alpha e^{-i\theta})\qquad\;\beta\in\cc\setminus\zz_{\geq0}.\notag
                \end{cases}
            \end{equation}
        \end{theorem}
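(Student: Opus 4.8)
The plan is to split the argument according to the two regimes $|\alpha|>1$ and $|\alpha|<1$, and within each regime to separate the integer values of $\beta$ (where $z^\beta$ is single-valued and the branch cut is invisible) from the generic values (where the jump across the cut is exactly what makes the integral nonzero). The integer cases will come from the residue theorem, and the generic cases from expanding the Cauchy kernel into a geometric series, integrating term by term, and recognizing the resulting power series as a ${}_2F_1$.

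First I would dispatch the integer cases directly. When $|\alpha|>1$ and $\beta\in\zz_{<0}$, the only singularity of $m_{\alpha,\beta,\theta}$ inside $\dd$ is the pole of order $|\beta|$ at the origin, so the integral equals $2\pi i\,\res_{z=0}\tfrac{z^\beta}{z-\alpha}$; expanding $(z-\alpha)^{-1}=-\sum_{n\ge0}\alpha^{-(n+1)}z^n$ and reading off the coefficient of $z^{-1}$ gives residue $-\alpha^{\beta}$, hence $-2\pi i\alpha^{\beta}$. For $\beta=0$ the integrand is analytic on $\cl{\dd}$ and the integral vanishes. When $|\alpha|<1$ and $\beta\in\zz_{\ge0}$ the power $z^\beta$ is entire, the only interior pole is the simple one at $z=\alpha$, and the integral is $2\pi i\alpha^{\beta}$. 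This accounts for all the non-hypergeometric lines of the statement.

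For generic $\beta$ the branch cut runs along the segment $\{re^{i\theta}:0\le r\le1\}$, so the residue theorem no longer applies directly and I would instead expand and integrate termwise. On $|z|=1$ one has $|z/\alpha|=|\alpha|^{-1}<1$ when $|\alpha|>1$, so $\tfrac{1}{z-\alpha}=-\sum_{n\ge0}\alpha^{-(n+1)}z^{n}$ converges uniformly there; since $z^\beta$ is bounded on the circle, interchanging sum and integral is legitimate and everything reduces to the elementary integrals $\int_{\del\dd}z^{\beta+n}\,dz$. Parametrizing $z=e^{it}$ over the arc on which the chosen branch is continuous, so that $z^{\beta+n}=e^{i(\beta+n)t}$, each such integral evaluates to $\tfrac{e^{i(\beta+n+1)\theta}}{\beta+n+1}\bigl(e^{2\pi\beta i}-1\bigr)$. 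After the shift $j=n+1$ and factoring out $e^{i\beta\theta}$ this leaves the series $\sum_{j\ge1}\tfrac{(\alpha^{-1}e^{i\theta})^{j}}{\beta+j}$. The case $|\alpha|<1$ is handled identically, expanding $\tfrac{1}{z-\alpha}=\sum_{n\ge0}\alpha^{n}z^{-n-1}$ (valid since $|\alpha/z|=|\alpha|<1$) and producing $\sum_{n\ge0}\tfrac{(\alpha e^{-i\theta})^{n}}{\beta-n}$.

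Finally I would identify these series with the hypergeometric function. The key observation is the telescoping Pochhammer ratio $\tfrac{(\beta)_n}{(1+\beta)_n}=\tfrac{\beta}{\beta+n}$ (and likewise $\tfrac{(-\beta)_n}{(1-\beta)_n}=\tfrac{\beta}{\beta-n}$), together with $(1)_n=n!$, which yields $\sum_{n\ge0}\tfrac{x^{n}}{\beta+n}=\tfrac1\beta\,{}_2F_1(1,\beta;1+\beta;x)$; completing the $|\alpha|>1$ series to include $j=0$ and subtracting that term then recovers the bracketed $1-{}_2F_1(1,\beta;1+\beta;\alpha^{-1}e^{i\theta})$, while the $|\alpha|<1$ series is already in the stated form $\tfrac1\beta\,{}_2F_1(1,-\beta;1-\beta;\alpha e^{-i\theta})$. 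Convergence is never in question, since the arguments $\alpha^{-1}e^{i\theta}$ and $\alpha e^{-i\theta}$ have modulus strictly below $1$. I expect the main obstacle to be the careful bookkeeping at the branch cut in evaluating $\int_{\del\dd}z^{\beta+n}\,dz$: fixing the arc of continuity of $\log_\theta$, tracking the two one-sided boundary values of $z^\beta$ on either side of the ray $\arg z=\theta$, and thereby pinning down both the jump factor $e^{2\pi\beta i}-1$ and the phase $e^{i\beta\theta}$ with the correct sign; everything downstream is routine once this constant is secured.
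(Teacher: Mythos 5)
Your proposal is correct and follows essentially the same route as the paper: expand the Cauchy kernel in a geometric series on $|z|=1$, integrate term by term (justifying the interchange by uniform convergence and handling the jump across the branch cut, which produces the factor $e^{i\beta\theta}(e^{2\pi\beta i}-1)$), and identify the resulting series via $\tfrac{(\beta)_n}{(1+\beta)_n}=\tfrac{\beta}{\beta+n}$ as a ${}_2F_1$. The only cosmetic differences are that you dispatch the integer cases by the residue theorem rather than by observing that the term integrals collapse to $2\pi\delta_{\beta,-k}$, and you derive the $|\alpha|<1$ series directly where the paper cites Mortini and Rupp for it.
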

        
        In \S\ref{contour-1}, the unit circle is approximated with a contour of integration which avoids the branch cut in order to derive an equation involving \eqref{main-integral}. The connection between \eqref{main-integral} and the hypergeometric function is made in \S\ref{hypergeo-section} through the identification of a core integral in \S\ref{core}. In \S\ref{series-method-section}, series manipulation leads to the proof of Theorem \ref{thm}. The particular case when $\beta \in \qq\setminus\zz$ is further simplified in \S\ref{m/n-sec}, and in \S\ref{diffeq} we include a derivation of a differential equation for which \eqref{main-integral} is a solution.

    \section{Contour Method} \label{contour-1}
        We first extend \S1 in \cite{mortini-rupp}, evaluating \eqref{main-integral} via contour integration.  For this section alone (\S\ref{contour-1}) it is additionally assumed that $\Arg(\alpha)\neq\theta$ and $\alpha\neq0$, so that $\alpha$ does not lie on the branch cut. Furthermore, we assume that $\Re(\beta)>0$, as this condition will be necessary for certain bounds.  The purpose of this section is to prove the following lemma:
        \begin{lemma}\label{contour-lemma}
            If $\Arg(\alpha)\neq\theta$, and $\Re(\beta)>0$, then for $0<|\alpha|<1$,
            \begin{equation}
                \int_{\del\dd}m_{\alpha,\beta,\theta} = 2\pi i \alpha^\beta + e^{i\beta\theta}(1-e^{-2\pi i\beta})\int_0^1\frac{e^{\beta\ln{t}}}{t-\alpha e^{-i\theta}}\;dt,\notag
            \end{equation}
            and for $|\alpha|>1$,
            \begin{equation}
                \int_{\del\dd}m_{\alpha,\beta,\theta} = e^{i\beta\theta}(1-e^{-2\pi i\beta})\int_0^1\frac{e^{\beta\ln{t}}}{t-\alpha e^{-i\theta}}\;dt.\notag
            \end{equation}
            \begin{proof}
                There are 3 main steps:
                \begin{enumerate}
                    \item[\S\ref{contour-construction})] constructing a proper contour;
                    \item[\S\ref{res-app})] finding singularities and computing their residues;
                    \item[\S\ref{res-app})] using limits to derive a useful equation.
                \end{enumerate}
                The lemma follows from plugging \eqref{lim-lhs-contour-integral}, \eqref{lim-L-contour-integral}, \eqref{lim-C-contour-integral}, \eqref{lim-M-contour-integral}, and \eqref{lim-D-contour-integral} all back into \eqref{lim-contour-integral}.
            \end{proof}
        \end{lemma}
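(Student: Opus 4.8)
The plan is to realize $\int_{\del\dd} m_{\alpha,\beta,\theta}$ as one arc of a closed contour lying in the slit plane $\cc\setminus\{re^{i\theta}:r\ge0\}$, on which $m_{\alpha,\beta,\theta}$ is meromorphic, and then apply the Residue Theorem. Concretely, for small $\eps,\delta>0$ I would integrate over the positively oriented boundary of the region $\{z:\delta\le|z|\le1\}$ with the ray $\arg z=\theta$ slit out. This boundary consists of five arcs: a large arc $L$ along $|z|=1$ running counterclockwise from just past $e^{i\theta}$ almost all the way around; two radial segments hugging the two banks of the cut; a small arc $D$ on $|z|=\delta$ about the origin; and a short arc $C$ of radius $\eps$ bridging the banks to $L$ near $e^{i\theta}$. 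Since $m_{\alpha,\beta,\theta}$ is analytic on and inside this contour except for the simple pole at $z=\alpha$, the Residue Theorem gives a master identity equating the sum of the five arc-integrals to $2\pi i\,\res_{z=\alpha}m_{\alpha,\beta,\theta}$. That residue is $\alpha^\beta=e^{\beta\log_\theta(\alpha)}$, and it is enclosed precisely when $|\alpha|<1$; this is the sole origin of the $2\pi i\alpha^\beta$ term, which is correspondingly absent when $|\alpha|>1$.

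Next I would take $\eps,\delta\to0$ in each arc. The large arc $L$ converges to $\int_{\del\dd}m_{\alpha,\beta,\theta}$ (the integrand is bounded, with only a jump discontinuity at $e^{i\theta}$, so the improper integral exists). On the two banks I parametrize $z=te^{i\theta}$ with $t\in(0,1)$ and simplify $\tfrac{e^{i\theta}}{te^{i\theta}-\alpha}=\tfrac{1}{t-\alpha e^{-i\theta}}$; the crucial point is that $e^{\beta\log_\theta(z)}$ has different limiting values on the two banks, differing by exactly the factor $e^{2\pi\beta i}$ that $e^{\beta\log_\theta}$ picks up across the cut. Because the banks are traversed in opposite radial directions, the two integrals do not cancel but combine into $e^{i\beta\theta}(e^{2\pi\beta i}-1)\int_0^1\frac{e^{\beta\log_\theta(t)}}{t-\alpha e^{-i\theta}}\,dt$, the second term of the lemma, where I use $\log_\theta(t)=\ln t$ for real $t\in(0,1)$. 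Substituting all five limits into the master identity and solving for $\int_{\del\dd}m_{\alpha,\beta,\theta}$ yields both displayed formulas.

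The main obstacle is the branch bookkeeping together with the vanishing of the two auxiliary arcs, which is exactly where the hypotheses are consumed. On $D$ one has $|e^{\beta\log_\theta(z)}|=|z|^{\Re(\beta)}e^{-\Im(\beta)\arg z}$ with the last factor bounded, while $|z-\alpha|\ge|\alpha|-\delta$ stays bounded below since $\alpha\neq0$; thus the integrand is $O(\delta^{\Re(\beta)})$ and, against arclength $O(\delta)$, the contribution is $O(\delta^{1+\Re(\beta)})\to0$ precisely because $\Re(\beta)>0$ (the same positivity makes the radial integral converge at $t=0$). On $C$ the factor $e^{\beta\log_\theta(z)}$ stays bounded while $|z-\alpha|$ is bounded below by about $\bigl||\alpha|-1\bigr|>0$, which is positive exactly because $|\alpha|\neq1$; since the arclength is $O(\eps)$, this contribution also vanishes. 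The remaining delicate point is to fix the correct limiting value of $\arg z$ on each bank from the normalization $\log_\theta(1)=0$, so that the jump across the cut is recorded with the right sign in $e^{2\pi\beta i}-1$; once the bank orientations are pinned down, the rest is routine estimation.
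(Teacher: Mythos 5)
Your proposal is correct and follows essentially the same route as the paper: a keyhole-type contour in the slit disk, the Residue Theorem contributing $2\pi i\alpha^\beta$ exactly when $|\alpha|<1$, the two sides of the cut combining into the factor $e^{i\beta\theta}(e^{2\pi\beta i}-1)$ times the real integral, and the inner circle vanishing because $\Re(\beta)>0$. The only differences are cosmetic --- the paper opens an angular gap of width $2\eps$ around the cut and justifies the $\eps\to0^+$ limit on the radial segments via dominated convergence rather than placing them on the banks outright --- together with the sign bookkeeping on the two banks, which you already flag as the one delicate point.
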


        \subsection{Constructing the Contour} \label{contour-construction}
           Take a branch of the complex logarithm $\log_\theta$ in the definition of $z^\beta$, and let the contour of integration $\Gamma_{\eps,\theta,\rho}$ consist of:
           \begin{enumerate}[label = \alph*)]
                \item the line segment $L_{\eps,\theta,\rho} \dc \{z \in \cc: \rho \le |z| \le 1, \arg z = \theta + \eps\}$,
                \item the arc $C_{\eps,\theta} \dc \{z \in \cc: |z| = 1, \theta + \eps \le \arg z \le \theta + 2\pi - \eps\}$,
                \item the line segment $M_{\eps,\theta,\rho} \dc \{z \in \cc: \rho \le |z| \le 1, \arg z = \theta + 2\pi - \eps\}$,
                \item the arc $D_{\eps,\theta,\rho} \dc \{z \in \cc : |z| = \rho, \theta + \eps \le \arg z \le \theta + 2\pi - \eps\}$,
            \end{enumerate}
            oriented as usual, with the bounded region enclosed on the left as we trace the contour. For example, for the principal branch of $\log$ ($\log_\pi$ in our notation), the contour is as in Figure \ref{contour-diagram}.
            \begin{figure}[h!]
                \begin{center}
                    \includegraphics{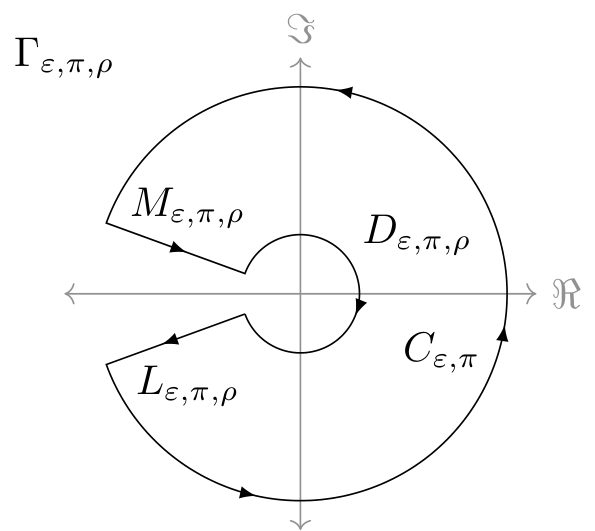}
                    \caption{Contour for $\theta = \pi$.}
                    \label{contour-diagram}
                \end{center}
            \end{figure}
            Under this definition, we have            \begin{equation}\label{contour-integral}
                \int_{\Gamma_{\eps,\theta,\rho}} m_{\alpha,\beta,\theta} = \left[\int_{C_{\eps,\theta}}+\int_{D_{\eps,\theta,\rho}}+\int_{L_{\eps,\theta,\rho}}+\int_{M_{\eps,\theta,\rho}}\right]m_{\alpha,\beta,\theta}.
            \end{equation}
            One can choose any parameterization of the four curves, noting that smooth equivalence of parameterizations will guarantee generality. In particular, we choose
            \begin{enumerate}[label = \alph*)]
                \item $L_{\eps,\theta,\rho}$: $z(t) = te^{i(\theta+\eps)}$ for $\rho\leq t\leq1$,
                    \begin{equation}\label{L-param}
                        \int_{L_{\eps,\theta,\rho}} m_{\alpha,\beta,\theta}(z) \; dz = \int_\rho^1 m_{\alpha,\beta,\theta}(te^{i(\theta + \varepsilon)})e^{i(\theta + \varepsilon)}\;dt;
                    \end{equation}
                \item $C_{\eps,\theta}$: $z(t) = e^{it}$ for $\theta+\eps \leq t \leq \theta+2\pi-\eps$,
                    \begin{equation}\label{C-param}
                        \int_{C_{\eps,\theta}} m_{\alpha,\beta,\theta}(z) \; dz = \int_{\theta+\eps}^{\theta+2\pi-\eps} m_{\alpha,\beta,\theta}(e^{it})\;ie^{it}dt;
                    \end{equation}
                \item $M_{\eps,\theta,\rho}$: $z(t) = te^{i(\theta + 2\pi - \eps)}$ for $1\geq t\geq\rho$,
                    \begin{equation}\label{M-param}
                        \int_{M_{\eps,\theta,\rho}} m_{\alpha,\beta,\theta}(z) \; dz = \int_1^\rho m_{\alpha,\beta,\theta}(te^{i(\theta + 2\pi - \varepsilon)})e^{i(\theta + 2\pi - \varepsilon)}\;dt;
                    \end{equation}
                \item $D_{\eps,\theta,\rho}$: $z(t) = \rho e^{it}$ for $\theta +2\pi - \eps\geq t\geq \theta+\eps$,
                    \begin{equation}\label{D-param}
                        \int_{D_{\eps,\theta,\rho}} m_{\alpha,\beta,\theta}(z) \; dz = \int_{\theta +2\pi - \eps}^{\theta+\eps} m_{\alpha,\beta,\theta}(\rho e^{it})\;i\rho e^{it}dt.
                    \end{equation}
            \end{enumerate}

        \subsection{Applying the Residue Theorem} \label{res-app}
            Applying Cauchy's Residue Theorem requires computing residues for singularities contained within the contour. To compute the residues of the meromorphic function $m_{\alpha,\beta,\theta}(z)$ defined in \eqref{m-def}, first note that $e^{\beta\log_\theta(z)}$ is analytic in $\cc\setminus\{re^{i\theta}\in\cc : r \ge 0\}$, so the only singularity of $m_{\alpha,\beta,\theta}$ is at $\alpha$, and this singularity only becomes relevant when $|\alpha|\leq 1$. This singularity is a simple pole, since
            \begin{equation}\label{residue}
                \lim_{z\to \alpha}(z-\alpha)m_{\alpha,\beta,\theta}(z) = \lim_{z\to \alpha}e^{\beta\log_\theta(z)} = \alpha^\beta \neq 0
            \end{equation}
            but 
            \begin{equation}\label{m-simple}
                \lim_{z\to \alpha}(z-\alpha)^2m_{\alpha,\beta,\theta}(z) = \lim_{z\to \alpha}(z-\alpha)e^{\beta\log_\theta(z)} = 0.\notag
            \end{equation}
            Evaluating as in \eqref{residue}, the residue at $\alpha$ is found to be $\alpha^\beta$.
        
            In order to derive an equation involving \eqref{main-integral}, one might consider first taking the limit $\eps\to0^+$ and then $\rho\to0^+$ in \eqref{contour-integral}:
            \begin{equation}\label{lim-contour-integral}
                \lim_{\rho\to0^+}\lim_{\eps\to0^+}\int_{\Gamma_{\eps,\theta,\rho}} m_{\alpha,\beta,\theta} = \lim_{\rho\to0^+}\lim_{\eps\to0^+}\left[\int_{C_{\eps,\theta}}+\int_{D_{\eps,\theta,\rho}}+\int_{L_{\eps,\theta,\rho}}+\int_{M_{\eps,\theta,\rho}}\right]m_{\alpha,\beta,\theta}.
            \end{equation}

                Since the contour $\Gamma_{\eps,\theta,\rho}$ in \eqref{lim-contour-integral} lies in the interior of the simply connected domain of $\log_\theta$ whenever $\eps, \rho > 0$, $m_{\alpha,\beta,\theta}$ is analytic on the path of integration so long as $\alpha$ does not lie on $\Gamma_{\eps,\theta,\rho}$. In this case, Cauchy's Residue Theorem applies and so 
                \begin{equation}
                    \int_{\Gamma_{\eps,\theta,\rho}} m_{\alpha,\beta,\theta} = 2\pi i\;n(\Gamma_{\eps,\theta,\rho},\alpha)\res(m_{\alpha,\beta,\theta},\alpha) = 2\pi i\alpha^\beta\;n(\Gamma_{\eps,\theta,\rho},\alpha) \notag
                \end{equation}
                where $n(\Gamma_{\eps,\theta,\rho},\alpha)$ is the winding number of $\Gamma_{\eps,\theta,\rho}$ around $\alpha$. Note that by definition of the contour, and because $\Arg(\alpha)\neq\theta$ by assumption, we have 
                \begin{equation}\label{winding-number}
                    n(\Gamma_{\eps,\theta,\rho},\alpha) = 
                    \begin{cases}
                        1\isp{if $0<\eps<\min_{\arg(\alpha)}\{|\arg(\alpha)-\theta|\}$ and $0<\rho<|\alpha|<1$,}\\
                        0\isp{otherwise,}
                    \end{cases}
                \end{equation}
                where the notation $\min_{\arg(\alpha)}$ in \eqref{winding-number} denotes that the minimum is taken over all possible representatives of $\arg(\alpha)$.
                It follows that
                \begin{equation}\label{eps0-winding-number}
                    \lim_{\eps\to0^+}n(\Gamma_{\eps,\theta,\rho},\alpha) = 
                    \begin{cases}
                        1\isp{if $0<\rho<|\alpha|<1$,}\\
                        0\isp{otherwise,}
                    \end{cases}\notag
                \end{equation}
                since $\eps$ can certainly be made smaller than $|\arg(\alpha)-\theta|>0$, and that
                \begin{equation}\label{rho0-eps0-winding-number}
                    \lim_{\rho\to0^+}\lim_{\eps\to0^+}n(\Gamma_{\eps,\theta,\rho},\alpha) = 
                    \begin{cases}
                        1\isp{if $0<|\alpha|<1$,}\\
                        0\isp{otherwise}
                    \end{cases}\notag
                \end{equation}
                since $\rho$ can certainly be made smaller than $|\alpha|>0$.
                Therefore               \begin{equation}\label{lim-lhs-contour-integral}
                    \lim_{\rho\to0^+}\lim_{\eps\to0^+}\int_{\Gamma_{\eps,\theta,\rho}} m_{\alpha,\beta,\theta} = 
                    \begin{cases}
                        2\pi i\alpha^\beta\,\isp{if $0<|\alpha|<1$,}\\
                        0\qquad\isp{otherwise.}
                    \end{cases}
                \end{equation}

                In evaluating $\lim_{\rho\to 0^{+}}\lim_{\eps\to 0^{+}}\int_{C_{\eps,\theta}}m_{\alpha,\beta,\theta}$, we use \eqref{C-param} to express \begin{align}
                    \lim_{\rho\to0^+}\lim_{\eps\to0^+}\int_{C_{\eps,\theta}}m_{\alpha,\beta,\theta} &= \lim_{\rho\to0^+}\lim_{\eps\to0^+}\int_{\theta+\eps}^{\theta+2\pi-\eps} m_{\alpha,\beta,\theta}(e^{it})\;ie^{it}dt,\notag\\
                    &= \lim_{\eps\to0^+}\int_{\theta+\eps}^{\theta+2\pi-\eps} m_{\alpha,\beta,\theta}(e^{it})\;ie^{it}dt,\notag\\
                    &= \ppp\vvv\int_{\theta}^{\theta+2\pi} m_{\alpha,\beta,\theta}(e^{it})\;ie^{it}dt,\notag\\
                    \label{improper-cpv} &= \int_{\theta}^{\theta+2\pi} m_{\alpha,\beta,\theta}(e^{it})\;ie^{it}dt,\\
                    \label{lim-C-contour-integral} &= \int_{|z|=1}m_{\alpha,\beta,\theta}(z)\;dz.
                \end{align}
                To see that the value of the improper integral in \eqref{improper-cpv} is the same as its principal value, note that whenever an improper integral converges, its principal value converges as well (and to the same value). By convention, \eqref{improper-cpv} is evaluated as
                \begin{equation}
                    \int_\theta^{\theta + 2\pi} m_{\alpha,\beta,\theta}(e^{it})\;ie^{it}dt = \lim_{\varepsilon \to 0}\int_{\theta+\varepsilon}^{\theta + \pi} m_{\alpha,\beta,\theta}(e^{it})\;ie^{it}dt + \lim_{\varepsilon' \to 0}\int_{\theta+\pi}^{\theta + 2\pi -\varepsilon'}m_{\alpha,\beta,\theta}(e^{it})\;ie^{it}dt. \label{improper}
                \end{equation}
                It suffices to show that $g(t) = m_{\alpha,\beta,\theta}(e^{it})\;ie^{it}$ is bounded on $[\theta,\theta+2\pi]$ in order for the right hand side of \eqref{improper} to converge, and thus for the desired improper integral to converge. We first bound the real part of $\beta\log_\theta(z)$, noting that
                \begin{align}
                    \beta\log_\theta(z) &= \left[\Re(\beta)+i\Im(\beta)\right]\left[\ln|z|+i\arg_\theta(z)\right],\notag\\
                    \label{b*logz} &= [\Re(\beta)\ln|z|-\Im(\beta)\arg_\theta(z)]+i[\Re(\beta)\arg_\theta(z)+\Im(\beta)\ln|z|],
                \end{align}
                where $\arg_\theta := \Im(\log_\theta)$. Since we fix $\log_\theta(1)=0$ for every $0<\theta<2\pi$, the continuity of $\log_\theta$ on its simply connected domain implies that
                \begin{equation}\label{arg-bound-sc}
                    -2\pi < \arg_\theta(z) < 2\pi\notag
                \end{equation}
                for all $z$ in the domain and for all $\theta$. Further, continuity also implies that even as $z$ approaches the branch cut (in a limiting sense),
                \begin{equation}\label{arg-bound}
                    -2\pi \leq \arg_\theta(z) \leq 2\pi.
                \end{equation}
                Equations \eqref{b*logz} and \eqref{arg-bound} along with the assumption $\Re(\beta)>0$ give the bound
                \begin{equation}\label{re-bound}
                    \Re(\beta\log_\theta(z)) = \Re(\beta)\ln|z|-\Im(\beta)\arg_\theta(z) \leq \Re(\beta)\ln|z|+2\pi|\Im(\beta)|.
                \end{equation}
                Since $|e^z|=e^{\Re(z)}$, we can now bound 
                \begin{align}
                    |m_{\alpha,\beta,\theta}(z)| &= \frac{|e^{\beta\log_\theta(z)}|}{|z-\alpha|},\notag\\
                    \label{pre-M-bound} &= \frac{e^{\Re(\beta\log_\theta(z))}}{|z-\alpha|},\\
                    \label{M-bound} &\leq \frac{e^{\Re(\beta)\ln|z|+2\pi|\Im(\beta)|}}{||z|-|\alpha||}.
                \end{align}
                    
                For $t\in[\theta,\theta+2\pi]$, the bound \eqref{M-bound} immediately gives
                \begin{equation}\label{cpv-bound}
                    |m_{\alpha,\beta,\theta}(e^{it})\;ie^{it}| \leq \frac{e^{\Re(\beta)\ln|e^{it}|+2\pi|\Im(\beta)|}}{||e^{it}|-|\alpha||} = \frac{e^{2\pi|\Im(\beta)|}}{|1-|\alpha||}.
                \end{equation}
                Thus both limits on the right hand side of \eqref{improper} converge, and hence the equality in \eqref{improper-cpv} is justified.

                Now we show that the portion of the integral over the contour $D_{\eps,\theta,\rho}$ approaches 0 as $\eps \rightarrow 0^+, \rho \rightarrow 0^+$.
                
                Using \eqref{M-bound} and applying an ML-bound to \eqref{D-param} yields
                \begin{align}
                    \left|\int_{D_{\eps,\theta,\rho}} m_{\alpha,\beta,\theta}\right| &= \left|\int_{\theta +2\pi - \varepsilon}^{\theta+\eps} m_{\alpha,\beta,\theta}(\rho e^{it})\;i\rho e^{it}dt\right|,\notag\\
                    &\leq \rho\frac{e^{\Re(\beta)\ln|\rho|+2\pi|\Im(\beta)|}}{|\rho-|\alpha||}(2\pi-2\eps),\notag\\
                    \label{ml-bound} &\leq 2\pi e^{2\pi|\Im(\beta)|}\frac{\rho^{\Re(\beta)}}{|\frac{|\alpha|}{\rho}-1|}.
                \end{align}
                Since $|\alpha|>0$, \eqref{ml-bound} gives
                \begin{align}
                    \left|\lim_{\rho\to0^+}\lim_{\eps\to0^+}\int_{D_{\eps,\theta}}m_{\alpha,\beta,\theta} \right| &\leq \lim_{\rho\to0^+}\lim_{\eps\to0^+}\left[2\pi e^{2\pi|\Im(\beta)|}\frac{\rho^{\Re(\beta)}}{|\frac{|\alpha |}{\rho}-1|}\right],\notag\\
                    &= \lim_{\rho\to0^+}\left[2\pi e^{2\pi|\Im(\beta)|}\frac{\rho^{\Re(\beta)}}{|\frac{|\alpha|}{\rho}-1|}\right],\notag\\
                    \label{lim-D-contour-integral} &= 0.
                \end{align}

                We now consider the limiting value of the integral along $L_{\eps,\theta,\rho}$. The core difficulty of this part of the contour integral is in evaluating
                \begin{equation}\label{eps-lim-L}
                    \lim_{\eps\to0^+}\int_\rho^1 m_{\alpha,\beta,\theta}(te^{i(\theta + \varepsilon)})e^{i(\theta + \varepsilon)}\;dt.
                \end{equation}
                The strategy is to use Lebesgue's Dominated Convergence Theorem.
                Take the family of functions defined on $[0,1]$
                \begin{equation}\label{fam-L}
                    \fff_L = \left\{f_\eps(t) = m_{\alpha,\beta,\theta}(te^{i(\theta + \eps)})e^{i(\theta + \eps)} \;\Big|\; 0<\eps<\pi\right\}.
                \end{equation}
                Since $m_{\alpha,\beta,\theta}$ is continuous on its simply connected domain except at the point $\alpha$ (which has measure 0), and because $te^{i(\theta + \eps)}$ is a continuous function in the positive real variable $t$, $\fff_L$ is a family of almost everywhere continuous functions. Luzin's Criterion implies that functions in $\fff_L$ are Lebesgue measurable \cite{efimov}.
                
                For any $f_\eps\in\fff_L$, we have
                \begin{align}
                    \lim_{\eps\to0^+}f_\eps(t) &= \lim_{\eps\to0^+}m_{\alpha,\beta,\theta}(te^{i(\theta + \varepsilon)})e^{i(\theta + \varepsilon)},\notag\\
                    &= \lim_{\eps\to0^+}\frac{e^{\beta\log_\theta(te^{i(\theta + \varepsilon)})}}{te^{i(\theta + \varepsilon)}-\alpha}e^{i(\theta + \varepsilon)},\notag\\
                    &= \lim_{\eps\to0^+}\frac{e^{\beta(\ln{t}+i(\theta - 2\pi + \varepsilon))}}{te^{i(\theta + \varepsilon)}-\alpha}e^{i(\theta + \varepsilon)},\notag\\
                    \label{fL-limeps0} &= \frac{e^{\beta(\ln{t}+i(\theta-2\pi))}}{te^{i\theta}-\alpha}e^{i\theta} =: g_\theta(t).
                \end{align}
                (Note $t\in[\rho,1]\subseteq(0,\infty)$ above, and since $\Arg(\alpha)\neq\theta$ we have that $te^{i\theta}-\alpha\neq0$ in the limit. The limit is evaluated using this fact along with the continuity of the exponential.)\\
                Equivalently, this means that for any sequence $\eps_n\to0^+$, $f_{\eps_n}$ converges pointwise to $g_\theta$ as $n\to\infty$.\\
                
                Fix $r$ with $0<r<\min_{\arg(\alpha)}|\arg(\alpha)-\theta|$, and consider
                \begin{equation}\label{fam-L^*}
                    \fff_L^* = \left\{f_\eps(t) = m_{\alpha,\beta,\theta}(te^{i(\theta + \eps)})e^{i(\theta + \eps)} \;\Big|\;0<\eps<r\right\}.
                \end{equation}
                Since $\fff_L^*\subseteq\fff_L$, the statement about pointwise convergence for $\fff_L$ also holds for $\fff_L^*$. It is worth mentioning that $\alpha$ is outside of the sector between $\theta-r$ and $\theta+r$, which implies there are no issues with boundedness for functions in $\fff_L^*$.
                
                Let $\delta:=\min_{\arg(\alpha)}\{|\arg(\alpha)-(\theta+\eps)|\}$. That is, $\delta$ gives the minimum difference in angle between $\theta+\eps$ and the vector from the origin out to $\alpha$.
                
                If $\delta\geq\tfrac{\pi}{2}$, simple geometry gives that $\alpha$ is at least a distance of $|\alpha|$ away from the segment $te^{i(\theta+\eps)}$ for $t\in[\rho,1]$. To see this, consider Figure \ref{delta-geq-pi/2} 
                \begin{figure}[h!]
                    \begin{center}
                        \includegraphics{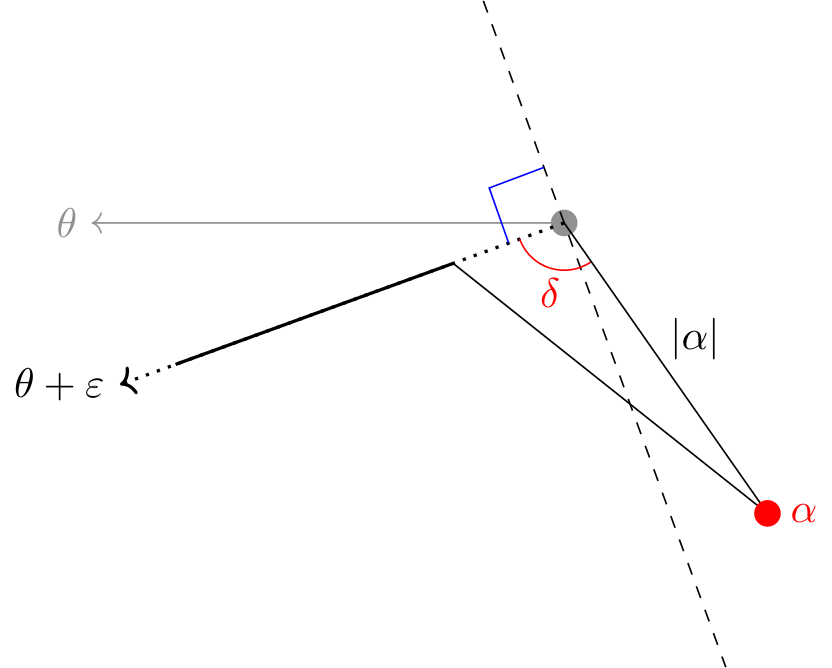}
                        \caption{Illustration of the case $\delta \geq \frac{\pi}{2}$.}
                        \label{delta-geq-pi/2}
                    \end{center}
                \end{figure}
                and note that the side of the triangle opposite the angle of size $\delta$ is the longest side of the triangle (since $\delta$ is either right or obtuse). Thus the shortest distance $d$ from $\alpha$ to the line segment $L_{\epsilon,\theta,\rho}$ is bounded below: 
                \begin{equation}\label{lb-alpha-far}
                    d > |\alpha|.
                \end{equation}
                
                If instead $\delta<\tfrac{\pi}{2}$, then $\alpha$ is at least a distance of $|\alpha|\sin(\delta)$ away from the segment $te^{i(\theta+\eps)}$ for $t\in[\rho,1]$. To see this consider the similar picture in Figure \ref{delta-less-pi/2} 
                \begin{figure}[h!]
                    \begin{center}
                       \includegraphics{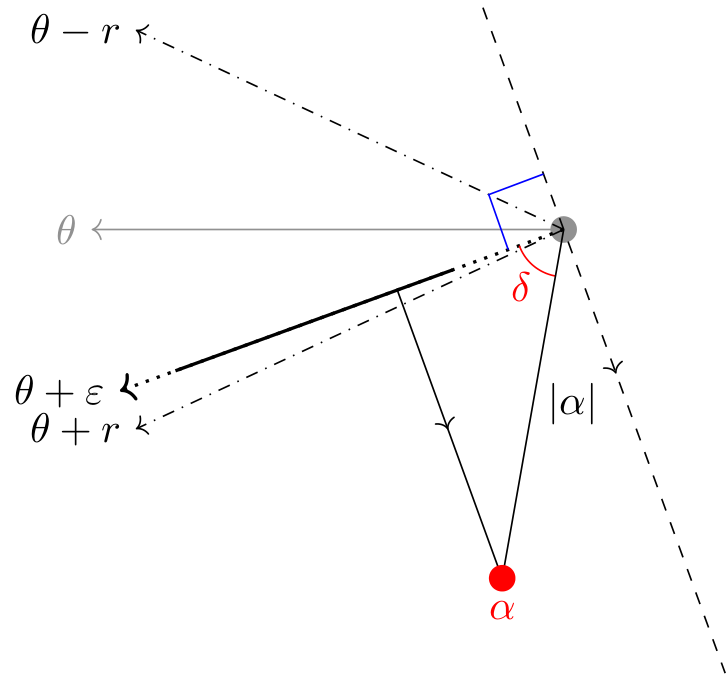}
                        \caption{Illustration of the case $\delta < \frac{\pi}{2}$.}
                        \label{delta-less-pi/2}
                    \end{center}
                \end{figure}
                and note that the altitude dropped from $\alpha$ to the line containing $L_{\eps,\theta,\rho}$ is precisely of length $|\alpha|\sin(\delta)$ (although the distance will be greater if $|\alpha|$ is so small or so large that the altitude dropped onto the line does not strike within the segment parameterized by $t\in[\rho,1]$).\\
                Now in our consideration of $F_L^*$, we have $\eps<r<\delta$ and so
                \begin{equation}
                    |\alpha|\sin(\delta) > |\alpha|\sin\left(\min_{\arg(\alpha)}\big\{|\arg(\alpha)-(\theta\pm r)|\big\}\right) > k > 0,\notag
                \end{equation}
                for a fixed constant $k$ only dependent on $r$, $\theta$, and $\alpha$.
                Thus in this case as well, the shortest distance $d$ from $\alpha$ to the line segment $L_{\epsilon,\theta,\rho}$ is bounded below:
                \begin{equation}\label{lb-alpha-close}
                    d > k.
                \end{equation}
                ~\\
                
                Consequently, for $K:=\min\{\tfrac{1}{|\alpha|},\tfrac{1}{k}\}$, we have that every function $f_\eps\in\fff_L^*$ has for all $t\in[\rho,1]$ that
                \begin{align}
                    |f_\eps(t)| &= |m_{\alpha,\beta,\theta}(te^{i(\theta + \eps)})e^{i(\theta + \eps)}|,\notag\\
                    &= |m_{\alpha,\beta,\theta}(te^{i(\theta + \eps)})|,\notag\\
                    &= \frac{e^{\Re(\beta\log_\theta(te^{i(\theta + \eps)}))}}{|te^{i(\theta + \eps)}-\alpha|},\\
                    &\leq Ke^{\Re(\beta)\ln|te^{i(\theta + \eps)}|+2\pi|\Im(\beta)|},\notag\\
                    \label{F_L^*-bound} &\leq Ke^{\Re(\beta)|t|+2\pi|\Im(\beta)|} =: h_r(t).
                \end{align}
                (The third equality holds by \eqref{pre-M-bound}; the first inequality holds by \eqref{re-bound} and the reasoning which led to \eqref{lb-alpha-far} and \eqref{lb-alpha-close}; the final inequality holds since $|t|>\ln|t|$ for all $t\in\rr$ and the because exponential is strictly increasing on $\rr$.)\\
                Clearly this $h_r$ is integrable on $[\rho,1]$ for all $\rho>0$, since it is simply a scaled exponential.\\
                
                Consider any arbitrary sequence $(\eps_n)\to0^+$ with $\eps_n < r$, and define a sequence of functions $(f_{\eps_n})$; note $f_{\eps_n}\in\fff_L^*$ for all $n$. From \eqref{fL-limeps0} we have $(f_{\eps_n})\to g_\theta$ pointwise, and $|f_{\eps_n}(t)|\leq h_r(t)$ for all $n$ and for all $t\in[0,1]$, as shown in \eqref{F_L^*-bound}. Therefore Lebesgue's Dominated Convergence Theorem implies that
                \begin{equation}\label{DT-single-L^*}
                    \lim_{n\to\infty}\int_\rho^1 f_{\eps_n}(t)\;dt = \int_\rho^1 g_\theta(t)\;dt.
                \end{equation}
                for all $\rho>0$.\\
                Dispensing with the condition $\eps_n < r$, it is still true for any arbitrary sequence $(\eps_n)\to0^+$ 
                that 
                \begin{equation}\label{DT-single-L}
                    \lim_{n\to\infty}\int_\rho^1 f_{\eps_n}(t)\;dt = \int_\rho^1 g_\theta(t)\;dt
                \end{equation}
                since $(\eps_n)\to0^+$ has a tail which is completely bounded above by $r$, and thus convergence of the tail shown in \eqref{DT-single-L^*} implies convergence of the whole sequence. Since \eqref{DT-single-L} holds for arbitrary $(\eps_n)$, this implies
                \begin{equation}\label{DT-L}
                    \lim_{\eps\to0^+}\int_\rho^1 f_\eps(t)\;dt = \int_\rho^1 g_\theta(t)\;dt.
                \end{equation}
                for $f_\eps \in \fff_L$.\\
                
               Hence we evaluate \eqref{eps-lim-L} and find
                \begin{align}
                    \lim_{\eps\to0^+}\int_\rho^1 m_{\alpha,\beta,\theta}(te^{i(\theta + \varepsilon)})e^{i(\theta + \varepsilon)}\;dt &= \int_\rho^1 \lim_{\eps\to0^+}m_{\alpha,\beta,\theta}(te^{i(\theta + \varepsilon)})e^{i(\theta + \varepsilon)}dt,\notag\\
                    &= \int_\rho^1 \frac{e^{\beta(\ln{t}+i(\theta-2\pi))}}{te^{i\theta}-\alpha}e^{i\theta}\;dt,\notag\\
                    \label{eps-lim-L-eval} &= e^{i\beta(\theta-2\pi)}\int_\rho^1 \frac{e^{\beta\ln{t}}}{t-\alpha e^{-i\theta}}\;dt.
                \end{align}
                But $g_\theta$ is continuous for $t\in(0,1]$ and bounded as $t\to0$. Therefore, allowing improper integrals, and drawing from equations \eqref{L-param} and \eqref{eps-lim-L-eval} it is straightforward to compute
                \begin{align}
                    \lim_{\rho\to0^+}\lim_{\eps\to0^+}\int_{L_{\eps,\theta,\rho}} m_{\alpha,\beta,\theta} &= \lim_{\rho\to0^+}\lim_{\eps\to0^+}\int_\rho^1 m_{\alpha,\beta,\theta}(te^{i(\theta + \varepsilon)})e^{i(\theta + \varepsilon)}\;dt,\notag\\
                    &= \lim_{\rho\to0^+}\left[e^{i\beta(\theta-2\pi)}\int_\rho^1 \frac{e^{\beta\ln{t}}}{t-\alpha e^{-i\theta}}\;dt\right],\notag\\
                    \label{lim-L-contour-integral} &= e^{i\beta(\theta-2\pi)}\int_0^1 \frac{e^{\beta\ln{t}}}{t-\alpha e^{-i\theta}}\;dt.
                \end{align}

                Finally we take limits in the last integral on the right hand side of \eqref{lim-contour-integral} along $M_{\eps,\theta,\rho}$. Similarly the difficulty in this case is evaluating
                \begin{equation}\label{eps-lim-M}
                    \lim_{\eps\to0^+}\int_1^\rho m_{\alpha,\beta,\theta}(te^{i(\theta + 2\pi - \varepsilon)})e^{i(\theta + 2\pi - \varepsilon)}\;dt
                \end{equation}
                using Lebesgue's Dominated Convergence Theorem. Analogous steps as those used for $L_{\eps,\theta,\rho}$ can be applied to the $M_{\eps,\theta,\rho}$ case to show that 
                \begin{align}
                    \lim_{\eps\to0^+}\int_1^\rho m_{\alpha,\beta,\theta}(te^{i(\theta + 2\pi - \varepsilon)})e^{i(\theta + 2\pi - \varepsilon)}\;dt &= -\int_\rho^1 \lim_{\eps\to0^+}m_{\alpha,\beta,\theta}(te^{i(\theta + 2\pi - \varepsilon)})e^{i(\theta + 2\pi - \varepsilon)}\;dt,\notag\\
                    &= -\int_\rho^1 \frac{e^{\beta(\ln(t)+i\theta)}}{te^{i(\theta+2\pi)}-\alpha}e^{i(\theta+2\pi)}\;dt,\notag\\
                    \label{eps-lim-M-eval} &= -e^{i\beta\theta}\int_\rho^1 \frac{e^{\beta\ln{t}}}{t-\alpha e^{-i\theta}}\;dt.
                \end{align}
                Just as before the integrand is bounded on $[0,1]$. Using \eqref{eps-lim-M-eval} there is no issue writing
                \begin{align}
                    \lim_{\rho\to0^+}\lim_{\eps\to0^+}\int_{M_{\eps,\theta,\rho}} m_{\alpha,\beta,\theta} &= \lim_{\rho\to0^+}\lim_{\eps\to0^+}\int_1^\rho m_{\alpha,\beta,\theta}(te^{i(\theta + 2\pi - \varepsilon)})e^{i(\theta + 2\pi - \varepsilon)}\;dt,\notag\\
                    &= \lim_{\rho\to0^+}\left[-e^{i\beta\theta}\int_\rho^1 \frac{e^{\beta\ln{t}}}{t-\alpha e^{-i\theta}}\;dt\right],\notag\\
                    \label{lim-M-contour-integral} &= -e^{i\beta\theta}\int_0^1 \frac{e^{\beta\ln{t}}}{t-\alpha e^{-i\theta}}\;dt.
                \end{align}

    \section{The Hypergeometric Function Connection} \label{hypergeo-section}
        
        \subsection{A Core Integral} \label{core}
            In order to fully evaluate \eqref{main-integral} using the contour method outlined in \S\ref{contour-1}, the following integral from Lemma \ref{contour-lemma} must be evaluated: 
            \begin{equation}\label{pre-cov-integral}
                \int_0^1\frac{e^{\beta\ln{t}}}{t-\alpha e^{-i\theta}}\;dt,
            \end{equation}
            which exists for $\Re(\beta)>-1$.
            The integral in \eqref{pre-cov-integral} is in fact an improper integral and can be written
            \begin{equation}
                \lim_{\rho\to0}\int_\rho^1\frac{e^{\beta\ln{t}}}{t-\alpha e^{-i\theta}}\;dt.\notag
            \end{equation}
            Algebraic manipulations give
            \begin{align}
                \frac{e^{\beta\ln{t}}}{t-\alpha e^{-i\theta}} &= \frac{e^{\ln{t}}}{t-\alpha e^{-i\theta}}e^{(\beta-1)\ln{t}},\notag\\
                &= \frac{t-\alpha e^{-i\theta}+\alpha e^{-i\theta}}{t-\alpha e^{-i\theta}}e^{(\beta-1)\ln{t}},\notag\\
                \label{alg-manip} &= \left(1+\frac{\alpha e^{-i\theta}}{t-\alpha e^{-i\theta}}\right)e^{(\beta-1)\ln{t}}.
            \end{align}
            Integrating first over the interval $[\rho,1]$ and using \eqref{alg-manip} yields
            \begin{equation}\label{split-int}
                \int_\rho^1 \frac{e^{\beta\ln{t}}}{t-\alpha e^{-i\theta}}\;dt = \int_\rho^1 e^{(\beta-1)\ln{t}}\;dt + \alpha e^{-i\theta}\int_\rho^1 \frac{e^{(\beta-1)\ln{t}}}{t-\alpha e^{-i\theta}}\;dt.
            \end{equation}
            Notice that the function $e^{(\beta-1)\log(t)}$ is the derivative of $\frac{1}{\beta}e^{\beta\log(t)},$ which is analytic on $[\rho,1]$. Thus 
            \begin{equation}\label{split-int-first-term}
                \int_\rho^1 e^{(\beta-1)\ln{t}}\;dt = \frac{1}{\beta}e^{\beta\ln(1)} - \frac{1}{\beta}e^{\beta\ln(\rho)} = \frac{1}{\beta} - \frac{1}{\beta}e^{\beta\ln(\rho)}.
            \end{equation}
            Substituting \eqref{split-int-first-term} into \eqref{split-int} and taking limits gives
            \begin{align}
                \lim_{\rho\to0}\int_\rho^1 \frac{e^{\beta\ln{t}}}{t-\alpha e^{-i\theta}}\;dt &= \lim_{\rho\to0}\left[\frac{1}{\beta} - \frac{1}{\beta}e^{\beta\ln(\rho)}\right] + \alpha e^{-i\theta}\lim_{\rho\to0}\int_\rho^1 \frac{e^{(\beta-1)\ln{t}}}{t-\alpha e^{-i\theta}}\;dt,\notag\\
                \label{close-to-2F1} \int_0^1 \frac{e^{\beta\ln{t}}}{t-\alpha e^{-i\theta}}\;dt &= \frac{1}{\beta} + \alpha e^{-i\theta}\int_0^1 \frac{e^{(\beta-1)\ln{t}}}{t-\alpha e^{-i\theta}}\;dt;
            \end{align}
            the integral on the right hand side of \eqref{close-to-2F1} exists for $\Re(\beta)>0$.
            Again the convergence of improper integrals follows from the boundedness of the integrands. Moving the constant inside the integral in \eqref{close-to-2F1} gives
            \begin{equation}\label{basically-2F1}
                \int_0^1 \frac{e^{\beta\ln{t}}}{t-\alpha e^{-i\theta}}\;dt = \frac{1}{\beta} - \int_0^1 \frac{e^{(\beta-1)\ln{t}}}{1-\left(\frac{1}{\alpha}e^{i\theta}\right)t}\;dt.
            \end{equation}
            Therefore finding a solution to \eqref{main-integral} using the contour integration method necessitates working with the following ``core integral'' for $z=\frac{1}{\alpha}e^{i\theta}$:
            \begin{equation}\label{core-integral}
                \int_0^1 t^{\beta-1}(1-zt)^{-1}\;dt.
            \end{equation}
            The choice to write $t^{\beta-1}$ rather than $e^{(\beta-1)\ln{t}}$ in \eqref{core-integral} is intentional, since generality is not lost when any branch $\log_\theta$ for $\theta\not\equiv0$ is used to define this complex power of $t\in[0,1]$.
            
        \subsection{Definition \& Relevant Identities}
            We investigate the integral in \eqref{core-integral} by making use of the well-studied hypergeometric function $_2 F_1(a,b,c;z)$. For $|z|<1$, this function is defined as the infinite series
            \begin{equation}\label{hypgeo-def}
                _2F_1(a,b,c;z) = \sum_{n=0}^\infty \frac{(a)_n (b)_n}{(c)_n n!} z^n, \ \ \ c \in \cc \setminus \zz_{\le0}
            \end{equation}
            where $(x)_n = \frac{\Gamma(x+n)}{\Gamma(x)}$ is the rising Pochhammer symbol.
            
            The hypergeometric series generalizes the geometric series, and is prominent in the study of linear differential equations with three regular singular points. The hypergeometric function is notably a solution to the hypergeometric equation, discussed in \S\ref{diffeq}.

            A comprehensive collection of identities involving $_2 F_1$ can be found in \cite{Erde53}. The most notable for our purposes is the following:
            
            For $|z|<1$ and $\Re(c)>\Re(b)>0$,
            \begin{equation}\label{euler-type-integral}
                _2F_1(a,b,c;z) = \frac{\Gamma(c)}{\Gamma(b)\Gamma(c-b)}\int_0^1 t^{b-1}(1-t)^{c-b-1}(1-tz)^{-a}\,dt.
            \end{equation}
            
            Letting $a=1$, $b=\beta$, and $c=1+\beta$ under the conditions for \eqref{euler-type-integral} gives
            \begin{align}
                _2F_1(1,\beta,1+\beta;z) &= \frac{\Gamma(1+\beta)}{\Gamma(\beta)\Gamma(1)}\int_0^1 t^{\beta-1}(1-t)^{0}(1-tz)^{-1}\,dt,\notag\\
                \label{key-2F1-id} &= \beta\int_0^1 t^{\beta-1}(1-zt)^{-1}\,dt,
            \end{align}
            such that the integral above is exactly the integral in \eqref{core-integral}, only scaled.

        \subsection{Final Steps of the Contour Method}
            We conclude the contour method for $|\alpha|>1$ by proving the following statement, making use of the hypergeometric identity \eqref{key-2F1-id}.
            \begin{prop}\label{contour-prop}
                When $|\alpha|>1$, $\Arg(\alpha)\neq\theta$, and $\Re(\beta)>0$,
                \begin{equation}\label{alpha>1-case}
                    \int_{\del\dd}m_{\alpha,\beta,\theta} = e^{i\beta\theta}(1-e^{-2\pi i\beta})\frac{1}{\beta}\left[1-\,_2F_1(1,\beta;1+\beta;\alpha^{-1}e^{i\theta})\right].
                \end{equation}
                \begin{proof}
                    Since $|\alpha|>1$, the last argument in the hypergeometric function satisfies
                    \begin{equation}
                         \left|\frac{1}{\alpha}e^{i\theta}\right| = \frac{1}{|\alpha|} < 1.\notag
                    \end{equation}
                    Under the assumption $\Re(\beta)>0$, one can apply the identity \eqref{key-2F1-id} and find
                    \begin{equation}
                        \int_0^1 \frac{t^{\beta-1}}{1-\left(\alpha^{-1}e^{i\theta}\right)t}\;dt = \frac{1}{\beta}\, _2F_1(1,\beta;1+\beta;\alpha^{-1}e^{i\theta}).
                    \end{equation}
                    With this expression for the core integral,  an application of Lemma \ref{contour-lemma} and \eqref{basically-2F1} completes the proof.
                \end{proof}
            \end{prop}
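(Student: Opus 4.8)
The plan is to identify the ``core integral'' on the right-hand side of \eqref{basically-2F1} as a scaled hypergeometric function by means of \eqref{key-2F1-id}, and then to assemble the claimed formula by substituting the result back into the $|\alpha|>1$ branch of Lemma \ref{contour-lemma}. Because all of the genuine analysis---the contour estimates, the dominated-convergence arguments of \S\ref{take-limits}, and the algebraic reduction of \S\ref{core}---is already in hand, what is left is essentially bookkeeping, conditional on checking the hypotheses of the integral representation \eqref{euler-type-integral}.

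First I would set $z \dc \alpha^{-1}e^{i\theta}$ and verify the two requirements of \eqref{euler-type-integral} for the parameter choice $a=1$, $b=\beta$, $c=1+\beta$ already made in \eqref{key-2F1-id}. The convergence condition $|z|<1$ is immediate from $|\alpha|>1$, since $|z|=|\alpha|^{-1}<1$. The parameter condition $\Re(c)>\Re(b)>0$ reduces to $\Re(1+\beta)>\Re(\beta)>0$: the left inequality is automatic, being equivalent to $1>0$, while the right inequality is precisely the standing hypothesis $\Re(\beta)>0$. I would also record that on the integration interval the positive real axis avoids the branch cut---since $0<\theta<2\pi$ forces $\theta\neq 0$---so that $\log_\theta(t)=\ln t$ and hence $e^{(\beta-1)\log_\theta(t)}=t^{\beta-1}$ for $t\in(0,1]$, exactly the identification made in \S\ref{core} that lets the integral in \eqref{basically-2F1} be read as the core integral \eqref{core-integral}.

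With the hypotheses confirmed, applying \eqref{key-2F1-id} gives
\begin{equation}
\int_0^1 \frac{t^{\beta-1}}{1-zt}\,dt = \frac{1}{\beta}\,_2F_1(1,\beta;1+\beta;z), \notag
\end{equation}
and feeding this into \eqref{basically-2F1} yields the core-integral value $\tfrac{1}{\beta}\left[1-\,_2F_1(1,\beta;1+\beta;\alpha^{-1}e^{i\theta})\right]$. Multiplying by the prefactor $e^{i\beta\theta}(e^{2\pi\beta i}-1)$ supplied by Lemma \ref{contour-lemma} then reproduces \eqref{alpha>1-case} verbatim. The only step demanding real care is the verification of the hypotheses of \eqref{euler-type-integral}: one must note explicitly that the margin in $\Re(c)>\Re(b)$ is exactly $1$ and therefore holds unconditionally, and that the branch identification $e^{(\beta-1)\log_\theta(t)}=t^{\beta-1}$ is legitimate for the chosen $\log_\theta$. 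Everything downstream is a single substitution, so once these points are settled the proposition follows with no further estimates.
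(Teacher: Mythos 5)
Your proposal is correct and follows essentially the same route as the paper: verify $|\alpha^{-1}e^{i\theta}|<1$, apply \eqref{key-2F1-id} under $\Re(\beta)>0$ to evaluate the core integral, and substitute back through \eqref{basically-2F1} into Lemma \ref{contour-lemma}. The extra care you take in checking $\Re(c)>\Re(b)>0$ and the branch identification $e^{(\beta-1)\log_\theta(t)}=t^{\beta-1}$ only makes explicit what the paper handles in \S\ref{core} and the statement of \eqref{euler-type-integral}.
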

            
            The case $0<|\alpha|<1$ cannot be approached in the same manner. While the initial steps in the contour method still hold, the integral identity from \eqref{key-2F1-id} does not apply since the last argument in the hypergeometric function now satisfies
            \begin{equation}
                \left|\frac{1}{\alpha}e^{i\theta}\right| = \frac{1}{|\alpha|} > 1;\notag
            \end{equation}
            which is outside the domain of \eqref{euler-type-integral}.

    \section{Series Method} \label{series-method-section}
        Fortunately there exist methods outside of contour integration which allow us to express \eqref{main-integral} in terms of the hypergeometric function in all cases. Rather than dealing with integral identities of the hypergeometric function, one can work with series to produce a term of the form \eqref{hypgeo-def}.

        \subsection{Proof of the main result.}
            Consider first $|\alpha|>1$. Recall from \eqref{improper-cpv} and \eqref{lim-C-contour-integral} that
            \begin{align}
                \int_{\del\dd} m_{\alpha,\beta,\theta} &=  \int_{|z|=1}\frac{e^{\beta\log_\theta(z)}}{z-\alpha}\,dz,\notag\\
                \label{alph>1-pv-int} &= \lim_{\eps\to0^+}\int_{\theta-2\pi+\eps}^{\theta-\eps} m_{\alpha,\beta,\theta}(e^{it})ie^{it}\;dt.
            \end{align}
            For $t \in (\theta-2\pi, \theta)$, $\log_\theta(e^{it})=it$, so one can rewrite the integrand as
            \begin{align}
                m_{\alpha,\beta,\theta}(e^{it})ie^{it} &= \frac{e^{\beta(\log_\theta(e^{it}))}}{e^{it}-\alpha}ie^{\log_\theta(e^{it})},\notag\\
                &= i\frac{e^{(\beta+1)(\log_\theta(e^{it}))}}{e^{it}-\alpha},\notag\\
                &= i\frac{e^{i(\beta+1)t}}{e^{it}-\alpha},\notag\\
                &= -\frac{ie^{i(\beta+1)t}}{\alpha}\cdot\frac{1}{1-\frac{1}{\alpha}e^{it}},\notag\\
                \label{alph>1-frac-to-geo} &= -\frac{ie^{i(\beta+1)t}}{\alpha}\sum_{k=0}^\infty \alpha^{-k}e^{ikt};
            \end{align}
            where \eqref{alph>1-frac-to-geo} follows by rewriting in terms of a convergent geometric series. Pulling the factor of $e^{it}$ inside the series yields   
            \begin{align}
                m_{\alpha,\beta,\theta}(e^{it})ie^{it} &= -ie^{i\beta t}\sum_{k=0}^\infty \alpha^{-(k+1)}e^{i(k+1)t},\notag\\
                &= -ie^{i\beta t}\sum_{k=1}^\infty \alpha^{-k}e^{ikt},\notag\\
                \label{alph>1-geo-series} &= -i\sum_{k=1}^\infty \alpha^{-k}e^{i(\beta+k)t}.
            \end{align}
            For a fixed $|\alpha|>1$ we have that $|\alpha|^{-1}<1$, so 
            \begin{equation}
                \left|\sum_{k=1}^\infty \alpha^{-k}e^{ikt}\right| \leq \sum_{k=1}^\infty |\alpha^{-k}e^{ikt}| = \sum_{k=1}^\infty |\alpha|^{-k} =: K_\alpha < \infty.\notag
            \end{equation}
            We define a sequence of functions $(f_n)$, where $f_n:[\theta-2\pi,\theta]\to\cc$ are given by 
            \begin{equation}
                f_n(t) := \sum_{k=1}^n \alpha^{-k}e^{i(\beta+k)t} = e^{i\beta t}\sum_{k=1}^n \alpha^{-k}e^{ikt}.\notag
            \end{equation}
            This sequence is uniformly bounded, since
            \begin{align}
                |f_n(t)| &= \left|e^{i\beta t}\sum_{k=1}^n \alpha^{-k}e^{ikt}\right|,\notag\\
                &\leq K_\alpha |e^{i\beta t}|,\notag\\
                &= K_\alpha e^{-\Im(\beta)t} =: g_\alpha(t).\notag
            \end{align}
            Note that $g_\alpha$ is integrable on $[\theta-2\pi+\eps,\theta-\eps]$ since it is simply a scaled exponential.
            It is clear that each $f_n$ is continuous as a finite sum of analytic functions, so again by Luzin's Criterion the functions are measurable \cite{efimov}.
            Additionally, their pointwise limit is the expression in \eqref{alph>1-geo-series}. Thus Lebesgue's Dominated Convergence Theorem implies 
            \begin{align}
                \int_{\theta-2\pi+\eps}^{\theta-\eps}\sum_{k=1}^\infty\alpha^{-k}e^{i(\beta+k)t}\;dt &= \lim_{n\to\infty} \int_{\theta-2\pi+\eps}^{\theta-\eps} \sum_{k=1}^n \alpha^{-k}e^{i(\beta+k)t}\;dt,\notag\\
                &= \lim_{n\to\infty}\sum_{k=1}^n \int_{\theta-2\pi+\eps}^{\theta-\eps}\alpha^{-k}e^{i(\beta+k)t}\;dt,\notag\\
                \label{alph>1-int-sum-swap} &= \sum_{k=1}^\infty \int_{\theta-2\pi+\eps}^{\theta-\eps}\alpha^{-k}e^{i(\beta+k)t}\;dt,
            \end{align}
            where the second equality holds since it is merely the interchange of an integral and finite sum.\\
                
            Using \eqref{alph>1-int-sum-swap} along with \eqref{alph>1-geo-series} yields
            \begin{align}
                \int_{\theta-2\pi+\eps}^{\theta-\eps} m_{\alpha,\beta,\theta}(e^{it})ie^{it}\;dt &= -i\int_{\theta-2\pi+\eps}^{\theta-\eps}\sum_{k=1}^\infty \alpha^{-k}e^{i(\beta+k)t}\;dt,\notag\\
                &= -i\sum_{k=1}^\infty \int_{\theta-2\pi+\eps}^{\theta-\eps}\alpha^{-k}e^{i(\beta+k)t}\;dt,\notag\\
                \label{alph>1-swapped-int-sum} &= -i\sum_{k=1}^\infty \alpha^{-k} \int_{\theta-2\pi+\eps}^{\theta-\eps}e^{i(\beta+k)t}\;dt.
            \end{align}
                
            An individual summand of \eqref{alph>1-swapped-int-sum} consists of an $\alpha^{-k}$ term multiplied by an integral.
            The integrand, $e^{i(\beta+k)t}$, is an entire function of $t$ and thus is bounded on the compact set $t\in[\theta-2\pi,\theta]$ by some $M$. Note that this bound $M$ can be chosen independent of $k$ since
            \begin{equation}
                e^{-\Im(\beta)t} = |e^{i(\beta+k)t}| < M \qquad\forall t\in[\theta-2\pi,\theta].\notag
            \end{equation}
                
            The length of the curve being integrated over is at most
            \begin{equation}
                (\theta-\eps)-(\theta-2\pi+\eps) = 2\pi-2\eps < 2\pi =: L,\notag
            \end{equation}
            where $L$ does not depend on $\eps$.\\
                
           Because the integrand is entire it is must be continuous on the path of integration, and so the $ML$-bound gives that 
            \begin{equation}
                \left|\int_{\theta-2\pi+\eps}^{\theta-\eps}e^{i(\beta+k)t}\;dt\right| \leq ML,\notag
            \end{equation}
            where $M$ and $L$ are given above and independent of $\eps$ and $k$. Thus each term of the series in \eqref{alph>1-swapped-int-sum} is bounded in modulus by $ML|\alpha|^{-k}$, so that
            \begin{equation}\label{M-test}
                \left|\sum_{k=1}^\infty \alpha^{-k} \int_{\theta-2\pi+\eps}^{\theta-\eps}e^{i(\beta+k)t}\;dt\right| \leq \sum_{k=1}^\infty ML|\alpha|^{-k}.
            \end{equation}
            Since $|\alpha|>1$, the right hand side of \eqref{M-test} converges, and the Weierstrass $M$-test implies that the series in \eqref{alph>1-swapped-int-sum} is uniformly convergent. Substituting the expression from \eqref{alph>1-swapped-int-sum} back into \eqref{alph>1-pv-int} allows the exchange of limit and infinite sum in \eqref{lim-infsum-xchange} to find that
            \begin{align}
                \int_{\del\dd} m_{\alpha,\beta,\theta} &= \lim_{\eps\to0^+}\left[-i\sum_{k=1}^\infty \alpha^{-k} \int_{\theta-2\pi+\eps}^{\theta-\eps}e^{i(\beta+k)t}\;dt\right],\notag\\
                \label{lim-infsum-xchange} &= -i\sum_{k=1}^\infty \alpha^{-k} \lim_{\eps\to0^+}\int_{\theta-2\pi+\eps}^{\theta-\eps}e^{i(\beta+k)t}\;dt,\\
                \label{alph>1-no-eps} &= -i\sum_{k=1}^\infty \alpha^{-k}\int_{\theta-2\pi}^{\theta}e^{i(\beta+k)t}\;dt.
            \end{align}
            The integrand in \eqref{alph>1-no-eps} is entire, and it has an antiderivative $\frac{e^{i(\beta+k)t}}{i(\beta+k)}$ when $\beta+k\neq0$; this antiderivative is also entire. This fact not only ensures the equality between \eqref{lim-infsum-xchange} and \eqref{alph>1-no-eps}, but it also allows the use of the Complex Fundamental Theorem of Calculus to conclude that for $\beta\notin\zz_{<0}$,
            \begin{equation}\label{alph>1-int-in-sum}
                \int_{\theta-2\pi}^{\theta}e^{i(\beta+k)t}\;dt = \left[\frac{e^{i(\beta+k)t}}{i(\beta+k)}\right]_{\theta-2\pi}^{\theta} = \frac{e^{i(\beta+k)\theta}}{i(\beta+k)}\left(1-e^{-2\pi i\beta}\right).
            \end{equation}
            Using definition \eqref{hypgeo-def} as well as our intermediates \eqref{alph>1-no-eps} and \eqref{alph>1-int-in-sum} we find
            \begin{align}
                \int_{\del\dd} m_{\alpha,\beta,\theta} &= -i\sum_{k=1}^\infty \alpha^{-k}\frac{e^{i(\beta+k)\theta}}{i(\beta+k)}\left(1-e^{-2\pi i\beta}\right),\notag\\
                &= -e^{i\beta\theta}\left(1-e^{-2\pi i\beta}\right)\sum_{k=1}^\infty \alpha^{-k}\frac{e^{ik\theta}}{\beta+k},\notag\\
                &= -e^{i\beta\theta}\left(1-e^{-2\pi i\beta}\right)\frac{1}{\beta}\left[\left(\sum_{k=0}^\infty \frac{\beta}{\beta+k}(\alpha^{-1}e^{i\theta})^k\right)-1\right],\notag\\
                &= -e^{i\beta\theta}\left(1-e^{-2\pi i\beta}\right)\frac{1}{\beta}\left[\left(\sum_{k=0}^\infty \frac{(1)_k (\beta)_k}{(1+\beta)_k k!} (\alpha^{-1}e^{i\theta})^k\right)-1\right],\notag\\
                &= -e^{i\beta\theta}\left(1-e^{-2\pi i\beta}\right)\frac{1}{\beta}\left[\,_2F_1(1,\beta;1+\beta;\alpha^{-1}e^{i\theta})-1\right],\notag\\
                \label{alph>1-series-beta>0} &= e^{i\beta\theta}\left(1-e^{-2\pi i\beta}\right)\frac{1}{\beta}\left[1-\,_2F_1(1,\beta;1+\beta;\alpha^{-1}e^{i\theta})\right],
            \end{align}
            so long as $\beta\neq0$. This completes the proof of Theorem \ref{thm} in the case where $\beta \in \cc \setminus \zz_{\le 0}$. To handle the cases when $\beta\in\zz_{\leq0}$, note that 
            \begin{equation}
                \int_{\theta-2\pi}^{\theta}e^{i(\beta+k)t}\;dt =
                \begin{cases}
                    2\pi\qquad\beta+k=0,\\
                    0\qquad\;\;\beta+k\in\zz\setminus\{0\},
                \end{cases}
            \end{equation}
            since the bounds of integration align with the period of the exponential unless the exponent is 0. Thus when $\beta\in\zz_{\leq0}$,
            \begin{align}
                \int_{\del\dd} m_{\alpha,\beta,\theta} &= -i\sum_{k=1}^\infty \alpha^{-k}2\pi\delta_{\beta,-k}\label{kronecker-delta}
            \end{align}
            where $\delta_{\beta,-k}$ is the classical Kronecker delta function. This completes the proof of Theorem \ref{thm} in the case where $\beta \in \zz_{\le 0}$.  This completes the first part of the main result.\\
            
            Next consider $|\alpha|<1$.
            We proceed in a manner analogous to that of the $|\alpha|>1$ case, omitting details for the sake of brevity. It holds that
            \begin{equation}\label{mortini-rupp-series-id}
                \int_{\del\dd} m_{\alpha,\beta,\theta} =
                \begin{cases}
                    e^{i\beta\theta}\left(1-e^{-2\pi i\beta}\right)\sum_{k=0}^\infty \frac{\alpha^k}{\beta-k}e^{-ik\theta}\quad\qquad\beta\notin\nn,\\
                    2\pi i\alpha^\beta \qquad\qquad\qquad\qquad\qquad\qquad\qquad\;\;\,\beta\in\nn.
                \end{cases}
            \end{equation}
                
           An application of \eqref{hypgeo-def} shows that for $\beta\neq0$,
            \begin{align}
                \sum_{k=0}^\infty \frac{\alpha^k}{\beta-k}e^{-ik\theta} &= \frac{1}{\beta}\sum_{k=0}^\infty \frac{-\beta}{-\beta+k}\left(\alpha e^{-i\theta}\right)^k,\notag\\
                &= \frac{1}{\beta}\sum_{k=0}^\infty \frac{(1)_k(-\beta)_k}{(1-\beta)_k k!}\left(\alpha e^{-i\theta}\right)^k,\notag\\
                \label{alph<1-series-2F1} &= \frac{1}{\beta}\,_2F_1(1,-\beta;1-\beta;\alpha e^{-i\theta}).
            \end{align}
            Combining \eqref{mortini-rupp-series-id} and \eqref{alph<1-series-2F1} completes the proof of Theorem \ref{thm}.

        \subsection{Reconciling Methods}
            Note that the steps of the contour method described in \S\ref{contour-1} and the simplifications in \S\ref{core} still hold so long as $\Arg(\alpha)\neq\theta$, $|\alpha|\neq 0,1$ and $\Re(\beta)>0$. The nontrivial equations of the series method hold so long as $|\alpha|\neq1$ and $\beta\notin\zz_{\geq0}$. Thus under all these conditions one can write an identity for \eqref{core-integral} in the case where $0<|\alpha|<1$:
            \begin{align}
                e^{i\beta\theta}\left(1-e^{-2\pi i\beta}\right)\frac{1}{\beta}\,_2F_1(1,-\beta;1-\beta;\alpha e^{-i\theta}) &= 2\pi i \alpha^\beta + e^{i\beta\theta}\left(1-e^{-2\pi i\beta}\right)\int_0^1\frac{e^{\beta\ln{t}}}{t-\alpha e^{-i\theta}}\;dt,\label{contour-series-id}\\
                \frac{1}{\beta}\,_2F_1(1,-\beta;1-\beta;\alpha e^{-i\theta}) &= \frac{2\pi i \alpha^\beta}{e^{i\beta\theta}\left(1-e^{-2\pi i\beta}\right)} + \int_0^1\frac{e^{\beta\ln{t}}}{t-\alpha e^{-i\theta}}\;dt,\notag\\
                \frac{1}{\beta}\,_2F_1(1,-\beta;1-\beta;\alpha e^{-i\theta}) &= \frac{2\pi i \alpha^\beta}{e^{i\beta\theta}\left(1-e^{-2\pi i\beta}\right)} + \frac{1}{\beta} - \int_0^1 \frac{e^{(\beta-1)\ln{t}}}{1-\left(\frac{1}{\alpha}e^{i\theta}\right)t}\;dt,\label{contour-with-core-series-id}\\
                \int_0^1 \frac{e^{(\beta-1)\ln{t}}}{1-\left(\frac{1}{\alpha}e^{i\theta}\right)t}\;dt &= \frac{2\pi i \alpha^\beta}{e^{i\beta\theta}\left(1-e^{-2\pi i\beta}\right)} + \frac{1}{\beta}\left[1 - \,_2F_1(1,-\beta;1-\beta;\alpha e^{-i\theta})\right] \label{new-id-alph}
            \end{align}
            where the equality in \eqref{contour-series-id} follows from Lemma \ref{contour-lemma} and \eqref{alph<1-series-2F1}, and the equality in \eqref{contour-with-core-series-id} follows from \eqref{basically-2F1}.\\
                

    \section{Computing the Example $\beta = \frac{m}{n}$.} \label{m/n-sec}
        Since the hypergeometric function gives the value of \eqref{main-integral} as an infinite series which is still difficult to explicitly evaluate, it is desirable to compute examples for which the hypergeometric function can be simplified more. We show this is the case when $\beta = \tfrac{m}{n}\in\qq\setminus\zz$, with $m\in\zz$, $n\in\nn$. We ignore the cases $\beta\in\zz$ since these are easily evaluated without need of the hypergeometric function.
        
        \begin{corollary}\label{b=rational}
            Let $\beta = \tfrac{m}{n}\in\qq\setminus\zz$. When $|\alpha|<1$, 
            \begin{equation}
                \int_{\del\dd} m_{\alpha,\beta,\theta} = \alpha^{-\frac{m}{n}}(1-e^{-2\pi i\frac{m}{n}}) \sum_{j=0}^{n-1}e^{\frac{2\pi ijm}{n}}\log\left(1-e^{\frac{2\pi ij}{n}}\sqrt[n]{\alpha e^{-i\theta}}\right)\notag
            \end{equation}
            and when $|\alpha|>1$
            \begin{equation}
                \int_{\del\dd} m_{\alpha,\beta,\theta} = (1-e^{-2\pi i\frac{m}{n}}) \left(\frac{n}{m}e^{i\frac{m}{n}\theta}+\alpha^{\frac{m}{n}}\sum_{j=0}^{n-1}e^{-\frac{2\pi ijm}{n}}\log\left(1-e^{\frac{2\pi ij}{n}}\sqrt[n]{\alpha^{-1}e^{i\theta}}\right)\right)\notag.
            \end{equation}
            \end{corollary}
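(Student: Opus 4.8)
The plan is to bypass the hypergeometric functions and work directly from the power-series forms already in hand. For $\beta=\tfrac mn\notin\zz$ the non-integer branch of Theorem~\ref{thm} (via \eqref{mortini-rupp-series-id} and \eqref{alph<1-series-2F1}) gives, for $|\alpha|<1$,
\[
\int_{\del\dd}m_{\alpha,\beta,\theta}=e^{i\beta\theta}\bigl(e^{2\pi\beta i}-1\bigr)\,\Sigma_<,\qquad \Sigma_<\dc\sum_{k=0}^\infty\frac{(\alpha e^{-i\theta})^k}{\beta-k},
\]
and, unwinding ${}_2F_1(1,\beta;1+\beta;\cdot)$ in the same way, a scalar multiple of $\Sigma_>\dc\sum_{k=1}^\infty (\alpha^{-1}e^{i\theta})^k/(\beta+k)$ for $|\alpha|>1$. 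Both series converge absolutely because $|\alpha|\neq1$, and since $\beta\notin\zz$ no denominator $m\mp nk$ vanishes. The entire corollary thus reduces to evaluating these two ``core series'' as finite sums of logarithms.

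The engine I would use is a roots-of-unity filter. Set $\zeta\dc e^{2\pi i/n}$ and $w\dc\sqrt[n]{\alpha e^{-i\theta}}$ (principal branch), so that $\zeta^0w,\dots,\zeta^{n-1}w$ are exactly the $n$ $n$-th roots of $\alpha e^{-i\theta}$ and $|\zeta^jw|=|\alpha|^{1/n}<1$. Expanding $-\log(1-\zeta^jw)=\sum_{l\ge1}(\zeta^jw)^l/l$ and using $\sum_{j=0}^{n-1}\zeta^{j(l+m)}=n$ when $n\mid l+m$ and $0$ otherwise, one finds $\sum_{j=0}^{n-1}\zeta^{jm}\log(1-\zeta^jw)=-n\sum_{l\ge1,\,l\equiv-m\,(n)}w^l/l$. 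The surviving indices are precisely $l=nk-m$, whose denominators reproduce those of $w^{-m}\Sigma_<=-n\sum_{k\ge0}w^{nk-m}/(nk-m)$. Multiplying back by $w^{m}$ and recombining the prefactor via $e^{i\beta\theta}w^{m}=\alpha^{m/n}$ collapses all $\theta$-dependence into a pure power of $\alpha$; the conjugate character $\zeta^{-jm}$ together with $w\dc\sqrt[n]{\alpha^{-1}e^{i\theta}}$ treats $\Sigma_>$ identically.

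The real work --- and the source of the additive $\tfrac nm e^{i\frac mn\theta}$ term --- is reconciling the two ranges of summation. The filtered logarithm runs over all $l\ge1$ in the class $-m\pmod n$, whereas $w^{-m}\Sigma_<$ runs over $l=nk-m\ge-m$; the two index sets differ by a finite block of terms whose position depends on $\operatorname{sgn}(m)$ and on $\lfloor m/n\rfloor$. Those leftover terms assemble into a Laurent polynomial in $w$ that, after multiplication by $w^{m}$ and the scalar prefactor, collapses to the single monomial recorded in the statement. I expect the delicate point to be this boundary bookkeeping together with the branch reconciliation: one must check that the $\log_\theta$-power $\alpha^{m/n}$ and the principal root $\sqrt[n]{\alpha e^{-i\theta}}$ are related with the correct root-of-unity factor, since it is exactly this comparison that fixes the sign of the exponent of $\alpha$ and the precise constant. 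As a safeguard I would specialize to $\beta=\tfrac12$, where $\zeta=-1$ and the sum telescopes to $\log\frac{1-w}{1+w}$, and check agreement with the Mortini--Rupp evaluation.
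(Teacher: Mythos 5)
Your engine is the same one the paper uses: a roots-of-unity filter applied to $-\log(1-\zeta^j w)=\sum_{l\ge1}(\zeta^j w)^l/l$ in order to resum $\sum_k z^k/(m\pm nk)$; the paper merely packages this filter through the auxiliary function $G(z^n)$ extracted from ${}_2F_1(1,\tfrac mn;1+\tfrac mn;z)$ before substituting $z\mapsto z^{1/n}$, so the route is not genuinely different. You are also right --- and more careful than the paper, whose proof silently identifies the index set $\{m+nk:k\ge0\}$ with $\{s\ge1:\ n\mid s-m\}$ --- that the entire content lies in reconciling the filtered index set with the one actually occurring in the core series, and in checking that the branch of $\sqrt[n]{\cdot}$ inside the logarithms is compatible with the branch used for the prefactor $\alpha^{\pm m/n}$.

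The gap is that your proposal stops exactly at that decisive step and asserts its outcome instead of computing it, and the asserted outcome is not what the computation yields. Do the bookkeeping for $|\alpha|<1$ with $0<m<n$: the $k=0$ term of $\Sigma_<$ carries the exponent $l=-m<0$, which the filtered logarithm series (supported on $l\ge1$) cannot see, so it survives as an additive contribution $\tfrac nm e^{im\theta/n}\bigl(e^{2\pi im/n}-1\bigr)$; moreover the prefactor recombines as $e^{i\beta\theta}w^{m}=\alpha^{+m/n}$, not $\alpha^{-m/n}$. Neither feature appears in the printed $|\alpha|<1$ formula, so your plan, executed honestly, lands on a statement different from the one displayed --- and your own proposed $\beta=\tfrac12$ check (say $\alpha=0.01$, $\theta=\pi$, where the theorem gives approximately $-4.04i$ while the displayed right-hand side gives approximately $+3.99i$) would expose this. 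Separately, when $|m|\ge n$ the leftover block is a Laurent polynomial with $\lfloor|m|/n\rfloor+1$ terms rather than a single monomial, so the ``collapses to the single monomial'' claim requires first reducing $\pm m$ to its least positive residue modulo $n$. Until the boundary block is actually computed in both regimes and the branch normalization is pinned down, the argument is a plan rather than a proof.
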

    \begin{proof}
        From Theorem \ref{thm}, one has for $\beta\notin\zz$ that
        \begin{equation}\label{rational-case}
            \int_{\del\dd}m_{\alpha,\frac{m}{n},\theta} = 
            \begin{cases}
                e^{i\frac{m}{n}\theta}\left(1-e^{-2\pi i\frac{m}{n}}\right)\frac{n}{m}\,_2F_1(1,-\frac{m}{n};1-\frac{m}{n};\alpha e^{-i\theta}) \qquad\qquad\quad\; |\alpha|<1,\\
                e^{i\frac{m}{n}\theta}\left(1-e^{-2\pi i\frac{m}{n}}\right)\frac{n}{m}\left[1-\,_2F_1(1,+\frac{m}{n};1+\frac{m}{n};\alpha^{-1}e^{i\theta})\right] \qquad |\alpha|>1.
            \end{cases}
        \end{equation}
        Therefore the main difficulty in evaluating \eqref{rational-case} lies in computing 
        \begin{equation}\label{rational-case-params}
            _2F_1\left(1,\frac{m}{n};1+\frac{m}{n};z\right)
        \end{equation}
        for non-integral $\tfrac{m}{n}$ and for $0<|z|<1$.
            
        Since $\frac{m}{n}\notin\zz$, it is never the case that the parameter $c=1+\frac{m}{n}$ in \eqref{rational-case-params} is 0 or a negative integer. The hypergeometric series is therefore well defined, and using the definition \eqref{hypgeo-def} yields
        \begin{align}
            _2F_1\left(1,\frac{m}{n};1+\frac{m}{n};z\right):&= \sum_{k=0}^\infty \frac{(1)_k(\frac{m}{n})_k}{k!(1+\frac{m}{n})_k}z^k. \notag \\
            &= \notag \sum_{k=0}^\infty \frac{\frac{m}{n}}{k+\frac{m}{n}}z^k \\
            &= \notag m\sum_{k=0}^\infty \frac{1}{m+nk}z^k\\
            &= \frac{m}{z^{\frac{m}{n}}} \sum_{k=0}^\infty \frac{1}{m+nk}z^{\frac{m}{n}+k} =: G(z). \label{defining-G(z)}
        \end{align}
        Note also that since $0<|z|$, division by a fractional power of $z$ causes no issue. The particular choice of branch for defining the $n^\textnormal{th}$ root does not matter so long as the choice is consistent across the fractional powers (see remarks \ref{log-rmk} and \ref{nrt-branch-rmk}).
        
        Notice that the expression for $G(z)$ produces an even simpler expression for $G(z^n)$, given by 
        \begin{equation}\label{almostlog}
            G(z^n) = \frac{m}{z^m}\sum_{k=0}^\infty \frac{1}{m+nk}z^{m+nk} 
        \end{equation}
        On the other hand, for any branch of the logarithm with $\log(1)=0$ analytic in a ball of radius 1 at $z=1$, we have 
        \begin{equation}
             \log(1-z) = \sum_{k=1}^\infty -\frac{1}{k}z^k\notag
        \end{equation}
        whenever $|z| < 1$. The difference between the above expression and that in \eqref{almostlog} is that only terms of the form $z^{m+nk}$ for $k \in \nn \cup \{0\}$ appear in \eqref{almostlog}, whereas a $z^k$ term appears in the series for $\log(1-z)$ for every $k \in \nn$. To rectify this we express $G(z^n)$ as some series $\sum_{s=1}^\infty \frac{\delta_s}{s}z^s$, possibly with leading factors, where $\delta_s$ takes on the value $1$ whenever $n$ divides $s-m$ (so that $s$ is of the form $m + nk$ for some $k \in \nn$) and is $0$ otherwise.\\
            
        To find a suitable function $\delta_s$, recall that the sum of all of the $n^\textnormal{th}$ roots of unity is $0$ for $n > 1$. It is natural then that 
        \begin{equation}\label{kronecker}
            \delta_s = \frac{1}{n}\sum_{j=0}^{n-1}e^{\frac{2\pi i j(s-m)}{n}} =
            \begin{cases}
               1 &\quad \text{if} \hspace{0.15cm} n \mid (s-m)\\
               0 &\quad \text{otherwise} 
            \end{cases}
        \end{equation}
        is the desired function.  To see the validity of this claim, we first consider when $n\mid(s-m)$. In this case, we have $\frac{s-m}{n}\in\zz$, and so
        \begin{equation}\label{delta-s}
            \frac{1}{n}\sum_{j=0}^{n-1}e^{\frac{2\pi ij(s-m)}{n}} = \frac{1}{n}\sum_{j=0}^{n-1} 1 = 1
        \end{equation}
        since $\frac{j(s-m)}{n}\in\zz$. On the other hand, when $n \nmid (s-m)$, let $d:=\gcd(n,s-m)$ and define $\eta := \tfrac{n}{d}$. Note that $d<n$ else we have $n\mid(s-m)$, and thus $\eta>1$. Now
        \begin{equation}
            e^{\frac{2\pi i(s-m)}{n}} = \zeta_\eta^{\frac{s-m}{d}}
        \end{equation}
        where $\zeta_\eta$ is the first primitive ${\eta}^\textnormal{th}$ root of unity. Note also that because $d$ is the greatest common divisor of $s-m$ and $n$, then $\frac{s-m}{d}$ is coprime to $\eta$. This implies that $\zeta_\eta^{\frac{s-m}{d}}$ is another primitive ${\eta}^\textnormal{th}$ root of unity. Now  
        \begin{align}
            \delta_s &= \frac{1}{n}\sum_{j=0}^{n-1}e^{\frac{2\pi i j(s-m)}{n}},\notag\\
            &= \frac{1}{d\eta}\sum_{j=0}^{d\eta-1}\zeta_\eta^{\frac{s-m}{d}j},\notag\\
            \label{ds-d-rounds} &= \frac{1}{d\eta}\left(\sum_{j=0}^{\eta-1}\zeta_\eta^{\frac{s-m}{d}j} +\sum_{j=\eta}^{2\eta-1}\zeta_\eta^{\frac{s-m}{d}j} + \cdots + \sum_{j=(d-1)\eta}^{d\eta-1}\zeta_\eta^{\frac{s-m}{d}j}\right),\\
            \label{ds-d-times} &= \frac{1}{\eta}\sum_{j=0}^{\eta-1}\zeta_\eta^{\frac{s-m}{d}j},\\
           \label{ds-zero} &= 0.
        \end{align}
        The equality between \eqref{ds-d-rounds} and \eqref{ds-d-times} holds since $\zeta_\eta^{\frac{s-m}{d}j}=\zeta_\eta^{\frac{s-m}{d}j'}$ when $j\equiv j'\pmod{\eta}$.
        The final equality, \eqref{ds-zero}, holds since the sum over every $\eta^\textnormal{th}$ root of 1 is 0.
            
        One can therefore express
        \begin{align}
            G(z^n) = \frac{m}{z^m}\sum_{s=1}^\infty \frac{\delta_s}{s}z^{s}
        \end{align}
        since this series gives the same terms as the series in \eqref{almostlog}. To evaluate this series, note that for each $j$ in $\{0,\dots,n-1\}$, the series $\sum_{s=1}^{\infty}\left|\frac{e^{\frac{2\pi ij(s-m)}{n}}}{s}\right|z^s$ is a power series which converges absolutely for $|z| < 1$. Letting the value of this series be denoted $b_j$, we also note that $\sum_{j=0}^{n-1}b_j$ converges since it is a finite sum.
        We may therefore exchange the order of summation to get 
        \begin{equation}\label{betterformofG}
            \sum_{j=0}^{n-1}\sum_{s=1}^{\infty}\frac{e^{\frac{2\pi ij(s-m)}{n}}}{s}z^s = \sum_{s=1}^{\infty}\sum_{j=0}^{n-1}\frac{e^{\frac{2\pi ij(s-m)}{n}}}{s}z^s = G(z^n)
        \end{equation} 
        which allows evaluation of $G(z^n)$ by simplifying the left hand side of \eqref{betterformofG}:
        \begin{align}
            G(z^n) =  \frac{m}{z^m}\sum_{j=0}^{n-1}\sum_{s=1}^{\infty}\frac{e^{\frac{2\pi ij(s-m)}{n}}}{s}z^s &= \frac{m}{z^m}\sum_{j=0}^{n-1}e^{-\frac{2\pi ijm}{n}}\sum_{s=1}^{\infty}\frac{e^{\frac{2\pi ijs}{n}}}{s}z^s,\notag\\
            &= \frac{m}{z^m}\sum_{j=0}^{n-1}-e^{-\frac{2\pi ijm}{n}}\sum_{s=1}^{\infty}-\frac{1}{s}(e^{\frac{2\pi ij}{n}}z)^s,\notag\\
            &= \frac{m}{z^m}\sum_{j=0}^{n-1}-e^{-\frac{2\pi ijm}{n}}\log(1-e^{\frac{2\pi ij}{n}}z).
        \end{align}
        \begin{remark}\label{log-rmk}
           Notice that the only requirement of the branch of $\log$ we choose is that it is analytic in the ball of radius 1 at $z=1$, and that $\log(1)=0$.
        \end{remark}
        Finally, to come up with an expression for $G(z)$ as opposed to $G(z^n)$, simply substitute $z^{\frac{1}{n}}$ in the expression above, yielding
        \begin{equation}\label{rational-case-formula}
            G(z) = -\frac{m}{n}z^{-\frac{m}{n}}\sum_{j=0}^{n-1}e^{-\frac{2\pi ijm}{n}}\log(1-e^{\frac{2\pi ij}{n}}\sqrt[n]{z})
        \end{equation}
        whenever $|z| < 1$.\\
            
        \begin{remark}\label{nrt-branch-rmk}
            The choice of branch for $\sqrt[n]{\cdot}$ does not matter, so long as the choice is consistent across the expression for $G(z)$. To see this more clearly, rewrite
            \begin{equation}\label{symmetric-form}
                G(z) = -\frac{m}{n}\sum_{j=0}^{n-1}\left(e^{-\frac{2\pi ij}{n}}\frac{1}{\sqrt[n]{z}}\right)^m\log\left(1-e^{\frac{2\pi ij}{n}}\sqrt[n]{z}\right).
            \end{equation}
            This sum is symmetric over the $n^\textnormal{th}$ roots of $z$.
            Any branch of $\sqrt[n]{\cdot}$ must map an input $z$ to one of the $n$ possible roots $\omega$ of  $\omega^n=z$. The symmetry in \eqref{symmetric-form} implies that, no matter the branch chosen, this sum will always have the same terms.
        \end{remark}
        From \eqref{defining-G(z)} we know that $G(z) = \,_2F_1\left(1,\frac{m}{n};1+\frac{m}{n};z\right)$, and hence \eqref{rational-case-formula} allows us to conclude that for $\beta = \tfrac{m}{n}\in\qq\setminus\zz$ with $m\in\zz$, $n\in\nn$, 
            \begin{equation}\label{before-substituting-z's} _2F_1\left(1,\frac{m}{n};1+\frac{m}{n};z\right) = -\frac{m}{n}z^{-\frac{m}{n}}\sum_{j=0}^{n-1}e^{-\frac{2\pi ijm}{n}}\log(1-e^{\frac{2\pi ij}{n}}\sqrt[n]{z})
            \end{equation}
            for all $|z|<1$. Finally, when $|\alpha|<1$ we have $|\alpha e^{-i\theta}|<1$, so substituting $z=\alpha e^{-i\theta}$ into \eqref{before-substituting-z's} proves the first conclusion of Corollary \ref{b=rational}. Similarly, when $|\alpha|>1$, we have that $|\frac{1}{\alpha}e^{i\theta}|<1$, and hence substituting $z=\alpha^{-1}e^{i\theta}$ into \eqref{before-substituting-z's} proves the second conclusion.
            \end{proof} 

    \section{Differential Equation} \label{diffeq}
        A key feature of the hypergeometric equation 
        \begin{equation} \label{hypergeo-eq-og}
            z(1-z) \frac{d^2F}{dz^2} + (c-(a+b+1)z)\frac{dF}{dz} - ab F = 0
        \end{equation}
        is its regular singular points, and it is well-known that they are $0,1,\infty.$ Hence, one might wish to derive a second-order ordinary differential equation in the variable $\alpha$ for which $I(\alpha) = \int_{\partial \mathbb{D}} m_{\alpha,\beta,\theta}$ is a solution, and determine its regular singular points.

        \subsection{The case $|\alpha|>1$} \label{sec:diffeq-|a|>1}
            For $|\alpha|>1$, $\beta \notin \zz_{\leq0}$, the desired equation follows by relating \eqref{main-integral} to the hypergeometric function $F(z) = {_2F_1}(a,b,c;z)$, which is famously a solution of \eqref{hypergeo-eq-og}.
            \\
            Let $f(z) = {_2F_1}(1,\beta,1+\beta;z)$. Then $f$ solves the equation
            \begin{equation}\label{hypergeometric-old}
                z(z-1)\frac{d^2 f}{d z^2} + ((1+\beta)-(2+\beta)z) \frac{d f}{d z} - \beta f = 0.
            \end{equation}
            Consider the change in variables $\alpha = \frac{1}{z}e^{i \theta}$ (equiv. $z = \frac{1}{\alpha} e^{i \theta}$), and make the following necessary calculations:
            \begin{align}
                \frac{df}{dz} & = \frac{d\alpha}{dz} \frac{df}{d\alpha} = -\frac{1}{z^2} e^{i \theta} \frac{df}{d\alpha}  = -\alpha^2 e^{-i \theta} \frac{df}{d\alpha}, \notag \\
                \frac{d^2f}{dz^2} & = \frac{d\alpha}{dz} \cdot \frac{d}{d\alpha} \frac{df}{dz}\notag\\ 
                &= -\alpha^2 e^{-i\theta} \left(-\alpha^2 e^{-i \theta} \frac{d^2f}{d\alpha^2} -2\alpha e^{-i\theta} \frac{df}{d\alpha}\right)\notag\\
                &= \alpha^4 e^{-2i\theta} \frac{d^2f}{d\alpha^2} + 2\alpha^3 e^{-2i\theta} \frac{df}{d\alpha}.\notag 
            \end{align}
            By substituting into \eqref{hypergeometric-old}, notice that $f_*(\alpha) := f\left(\frac{1}{\alpha} e^{i \theta}\right)$ solves 
            \begin{equation}
                \alpha^{-1} e^{i \theta} \left(\alpha^{-1} e^{i \theta}-1\right) \left(\alpha^4 e^{-2i \theta} \frac{d^2f_*}{d\alpha^2} + 2\alpha^3 e^{-2i \theta} \frac{df_*}{d\alpha}\right) + \left((1+\beta)-(2+\beta)(\alpha^{-1} e^{i \theta})\right) \left(-\alpha^2 e^{-i \theta}\frac{df_*}{d\alpha}\right) - \beta f_* = 0,\notag
            \end{equation}
            which after some simplification becomes
            \begin{equation} \label{diffeq-f_*}
                p_2(\alpha) \frac{d^2f_*}{d\alpha^2}  + p_1(\alpha) \frac{df_*}{d\alpha} - \beta f_* = 0,
            \end{equation}
            where $p_2(\alpha) = \alpha^2 - \alpha^3 e^{-i \theta}, p_1(\alpha) = \alpha(\beta+4) - \alpha^2(\beta+3)e^{-i \theta}$.
            \\
            From Theorem \ref{thm}, $f_*(\alpha) = 1-kI(\alpha)$, with the abbreviation $k=\frac{\beta}{e^{i\beta\theta}(1-e^{-2\pi i\beta})}$, and we calculate the derivatives to be
            \begin{align}
                \frac{df_*}{d\alpha} &= \frac{df_*}{dI} \frac{dI}{d\alpha} = -k \frac{dI}{d\alpha}, \notag \\
                \frac{d^2f_*}{d\alpha^2} &= -k \frac{d^2I}{d\alpha^2}. \notag
            \end{align}
            Substitution into $\eqref{diffeq-f_*}$ yields that $I(\alpha)$ solves the equation
            \begin{equation}
                 p_2(\alpha) \left( -k \frac{d^2I}{d\alpha^2} \right)  + p_1(\alpha) \left( -k \frac{dI}{d\alpha} \right) - \beta \left( 1 - k I \right) = 0,\notag
            \end{equation}
            or rather,
            \begin{equation}\label{hypergeo-I}
                p_2(\alpha) \frac{d^2I}{d\alpha^2} + p_1(\alpha) \frac{dI}{d\alpha} - \beta I = e^{i \beta \theta} (e^{-2\pi i\beta}-1).
            \end{equation}
            From \eqref{hypergeo-I} it is clear that the normalized coefficients $\frac{p_1(\alpha)}{p_2(\alpha)}\alpha$ and $\frac{-\beta}{p_2(\alpha)}\alpha^2$ are analytic in a neighborhood of $0$. Similarly, $\frac{p_1(\alpha)}{p_2(\alpha)}(\alpha-e^{i \theta})$ and $\frac{-\beta}{p_2(\alpha)}(\alpha-e^{i \theta})^2$ are analytic in a neighborhood of $e^{i \theta}$. These coefficients have poles at $0$ and $e^{i \theta}$, so these are regular singular points. To classify the point at infinity, let $x=1/\alpha$ and rewrite \eqref{hypergeo-I} in $x$. Akin to a previous change of variables, one has
            \begin{align}
                \frac{dI}{d\alpha} & = -x^2 \frac{dI}{dx}, \notag \\
                \frac{d^2I}{d\alpha^2} & = x^4 \frac{d^2I}{dx^2} + 2x^3 \frac{dI}{dx}, \notag
            \end{align}
            so that \eqref{hypergeo-I} becomes
            \begin{equation}
                p_2 \left( \frac{1}{x} \right) \left( x^4 \frac{d^2I}{dx^2} + 2x^3 \frac{dI}{dx}\right) + p_1\left(\frac{1}{x}\right) \left( -x^2 \frac{dI}{dx}\right) - \beta I = e^{i \beta \theta} (e^{-2\pi i\beta}-1),\notag
            \end{equation}
            or equivalently,
            \begin{equation}\label{hypergeo-inverted}
                q_2(x) \frac{d^2I}{dx^2} + q_1(x) \frac{dI}{dx} - \beta I = e^{i \beta \theta} (e^{-2\pi i\beta}-1),
            \end{equation}
            where $q_2(x) = x^2-xe^{-i \theta},$ $q_1(x) = -x(\beta+2) + (\beta+1)e^{-i \theta}.$ By a similar line of reasoning, the regular singular points of \eqref{hypergeo-inverted} are $x=0$ and $x=e^{-i \theta},$ so $\alpha=\infty$ and $\alpha=e^{i \theta}$ are both regular singular points of \eqref{hypergeo-I}.
            \\
            Thus equation \eqref{hypergeo-I}, for which \eqref{main-integral} is a solution, has precisely three regular singular points at $0, e^{i \theta}, \infty,$ reminiscent of \eqref{hypergeo-eq-og}. Any function satisfying a differential equation with three regular singular points may be expressed using the hypergeometric function, so this result supports the validity of the relationship derived.

        \subsection{The case $|\alpha|<1$} \label{sec:diffeq-0|a|<1}
            When $|\alpha|<1$, $\beta \notin \zz_{\geq0}$, one can proceed exactly as \S\ref{sec:diffeq-|a|>1} and make use of Theorem \ref{thm}. 
            \\
            Let $g(z)= {_2}F_1(1,-\beta,1-\beta,z).$ Then $g$ solves the equation
            \begin{equation}
                z(z-1) \frac{d^2g}{dz^2} + ((1-\beta) - (2-\beta) z) \frac{dg}{dz} + \beta g = 0.\notag
            \end{equation}
            The change of variables $\alpha=ze^{i \theta}$ gives that $g_*(\alpha) := g(\alpha e^{-i \theta})$ solves
            \begin{equation} \label{diffeq-g_*}
                r_2(\alpha) \frac{d^2 g_*}{d \alpha^2} + r_1(\alpha) \frac{dg_*}{d\alpha} + \beta g_* = 0,
            \end{equation}
            where $r_2(\alpha) = \alpha^2 - \alpha e^{i \theta},$ $r_1(\alpha) = (1-\beta) e^{i \theta} - (2-\beta) \alpha$. Theorem \ref{thm} gives $g_*(\alpha) = k I(\alpha)$, where again $k=\frac{\beta}{e^{i\beta\theta}(1-e^{-2\pi i\beta})}$. This scaling does not change the equation, so $I(\alpha)$ is also a solution of $\eqref{diffeq-g_*},$ with $g_*$ replaced with $I$. Just as before, one reasons that $\alpha=0,e^{i \theta}$ are regular singular points of this equation. In the variable $x=\frac{1}{\alpha},$ the equation \eqref{diffeq-g_*} written for $I$ is
            \begin{equation}
                s_2(x) \frac{d^2I}{dx^2} + s_1(x) \frac{dI}{dx} + \beta I = 0,\notag
            \end{equation}
            where $s_2(x)=x^2-x^3 e^{i \theta}, s_1(x) = \beta x - (1+\beta) e^{i \theta} x^2,$ of which the regular singular points are $x=0,e^{-i \theta}.$ Finally, one concludes that the regular singular points of the hypergeometric-like differential equation that $I(\alpha)$ solves are $0,e^{i \theta}, \infty$.
    
    \section*{Acknowledgements}
    The authors thank Mihai Putinar for helpful discussions and edits.
    
    \bibliographystyle{unsrt}
    \bibliography{main}

\begin{thebibliography}{1}

\bibitem{mortini-rupp}
{R}. {M}ortini and {R}. {R}upp.
\newblock {T}he {C}auchy {T}ransform of the {S}quare {R}oot {F}unction on the
  {C}ircle.
\newblock {\em Complex Analysis and Operator Theory}, 2022.

\bibitem{efimov}
{A}. {E}fimov, {B}.
\newblock Luzin criterion.
\newblock Encyclopedia of Math, 2017.
\newblock Last accessed: 20 July 2022.

\bibitem{Erde53}
A.~{E}rd\'elyi.
\newblock {\em {H}igher {T}ranscendental {F}unctions}.
\newblock McGraw-Hill, New York, NY, 1953.

\end{thebibliography}

\end{document}